\let\oldmarginpar\marginpar 
\renewcommand\marginpar[1]{\-\oldmarginpar{\raggedright\small\sf #1}}
\title{Trianguline lifts of global mod $p$ Galois representations}
\newcommand{\nc}{\newcommand}
\nc{\rnc}{\renewcommand}
\nc{\bs}{\backslash}
\nc{\te}{\otimes}
\nc{\lf}{\lfloor} 
\nc{\rf}{\rfloor}
\nc{\lc}{\lceil}  
\nc{\rc}{\rceil}
\nc{\lr}{\longrightarrow}
\nc{\sr}{\stackrel}
\nc{\dar}{\dashrightarrow}
\nc{\thra}{\twoheadrightarrow}
\nc{\hra}{\hookrightarrow}
\nc{\la}{\langle}
\nc{\ra}{\rangle} 
\nc{\ms}{\mathscr}
\nc{\mc}{\mathcal}
\nc{\mb}{\mathbb}
\nc{\mf}{\mathbf}
\nc{\mr}{\mathrm}
\nc{\mg}{\mathfrak}
\nc{\bP}{\mathbb{P}}
\rnc{\P}{\mathbb{P}}
\nc{\Q}{\mathbb{Q}}
\nc{\Z}{\mathbb{Z}}
\nc{\C}{\mathbb{C}}
\nc{\R}{\mathbb{R}}
\nc{\A}{\mathbb{A}}
\nc{\V}{\mathbb{V}}
\nc{\W}{\mathbb{W}}
\nc{\N}{\mathbb{N}}
\nc{\D}{\mathbb{D}}
\nc{\G}{\mathbb{G}}
\nc{\F}{\mathbb{F}}
\nc{\qb}{\overline{\mathbb{Q}}}
\nc{\qpb}{\overline{\mathbb{Q}}_p}
\nc{\del}{\partial}
\nc{\sq}{\square}
\nc{\wt}{\widetilde}
\nc{\wh}{\widehat}
\nc{\ov}{\overline}
\nc{\un}{\underline}
\nc{\aff}{{\A}^1}
\nc{\naive}{\!\sim_n}
\nc{\omx}{\omega_X}
\nc{\ep}{\epsilon}
\nc{\ve}{\varepsilon}
\nc{\vt}{\vartheta}
\nc{\vpi}{\varpi}
\rnc{\l}{\lambda}
\rnc{\k}{\kappa}
\rnc{\sl}{\shoveleft}
\nc{\res}{\operatorname{Res}}
\nc{\pic}{\operatorname{Pic}}
\nc{\spec}{\operatorname{Spec}}
\nc{\im}{\operatorname{Im}}
\nc{\gal}{\operatorname{Gal}}
\nc{\fr}{\operatorname{Fr}}
\nc{\ed}{\operatorname{ed}}
\nc{\rank}{\operatorname{rank}}
\nc{\h}{\operatorname{H}}
\nc{\ch}{\operatorname{char}}
\nc{\sw}{\operatorname{sw}}
\nc{\rsw}{\operatorname{rsw}}
\nc{\supp}{\operatorname{supp}}
\rnc{\hom}{\operatorname{Hom}}
\nc{\spf}{\operatorname{Spf}}
\nc{\Mor}{\operatorname{Mor}}
\nc{\Per}{\operatorname{Per}}
\nc{\prep}{\operatorname{Prep}}
\nc{\End}{\operatorname{End}}
\nc{\Orb}{\operatorname{Orb}}
\nc{\br}{\bar{\rho}}
\rnc{\un}{\mr{univ}}
\nc{\der}{\mr{der}}
\nc{\reg}{\mr{reg}}
\nc{\tri}{\mr{tri}}
\nc{\fgder}{\mg{g}^{\der}}
\nc{\red}{\mr{red}}
\nc{\rep}{\mr{Rep}}
\nc{\rb}{\bar{r}}
\nc{\co}{\mc{O}}
\nc{\Ad}{\mr{Ad}}
\nc{\glo}{\text{$\mr{GL}_n$-odd}}
\nc{\sln}{\mg{sl}_n}
\nc{\gln}{\mr{GL}_n}
\nc{\fg}{(\phi,\Gamma)}
\nc{\gfg}{(G,\phi,\Gamma)}
\nc{\tfg}{(T, \phi, \Gamma)}
\nc{\bfg}{(B, \phi, \Gamma)}
\nc{\ro}{\mathcal{R}}
\newtheorem{thm}{Theorem}[section]
\newtheorem{prop}[thm]{Proposition}
\newtheorem{cor}[thm]{Corollary}
\newtheorem{lem}[thm]{Lemma}
\theoremstyle{definition}
\newtheorem{defn}[thm]{Definition}
\newtheorem{rem}[thm]{Remark}
\newtheorem*{assA}{Assumption A}
\newtheorem*{assB}{Assumption B}
\numberwithin{equation}{section}
\begin{document}
\author[N.~Fakhruddin]{Najmuddin Fakhruddin}
\address{School of Mathematics, Tata Institute of Fundamental Research, Homi Bhabha Road, Mumbai 400005, INDIA}
\email{naf@math.tifr.res.in}
\author[C.~Khare]{Chandrashekhar Khare}
\address{UCLA Department of Mathematics, Box 951555, Los Angeles, CA 90095, USA}
\email{shekhar@math.ucla.edu}
\author[S.~Patrikis]{Stefan Patrikis}
\address{Department of Mathematics, Ohio State University, 100 Math Tower, 231 West 18th Ave., Columbus, OH 43210, USA}
\email{patrikis.1@osu.edu}

\maketitle

\begin{abstract}
  We show that under a suitable oddness condition, for $p \gg_F n$
  irreducible $n$-dimensional mod $p$ representations of the absolute
  Galois group of an arbitrary number field $F$ have characteristic
  zero lifts which are unramified outside a finite set of primes and
  trianguline at all primes of $F$ dividing $p$. We also prove a
  variant of this result under some extra hypotheses for
  representations into connected reductive groups.
\end{abstract}

\section{Introduction}

For any field $F$ we let $\Gamma_F$ be its absolute Galois group. In
the articles \cite{fkp-duke} and \cite{fkp-invent} we constructed
geometric (in the sense of Fontaine--Mazur) lifts for odd
representations $\br: \Gamma_F \to G(k)$, where $F$ is a totally real
number field, and $G$ is a split reductive group (possibly
disconnected) over the ring of integers $\co$ of a $p$-adic field $E$
with residue field $k$, when $\br$ satisfies certain additional
conditions. For example, if $G$ is connected,
$\br|_{\Gamma_{F(\zeta_p)}}$ is absolutely irreducible, and lifts
exist locally (which are regular de Rham at primes dividing $p$) at
all primes of $F$, then global lifts exist whenever $p$ is
sufficiently large by \cite[Theorem A]{fkp-duke}; in \cite{fkp-invent}
we constructed lifts for reducible representations under some more
technical hypotheses. The oddness assumption is crucial, but if
$G = \gln$ and $n>2$ then no $\br$ is odd in the sense of
\cite[Definition 1.2]{fkp-duke}, so the results of \cite{fkp-duke} and
\cite{fkp-invent} cannot be used to construct geometric
lifts.\footnote{The results of these papers do apply---and have
  interesting consequences---when $G = \gln$ and $F$ is a global
  function field, and even for number fields if we do not impose any
  conditions at primes dividing $p$.}  In fact, Calegari has proved
\cite[Theorem 5.1]{calegari:even2} that without any oddness condition
whatsoever geometric lifts need not exist even when $n=2$ and $F= \Q$.
The goal of this note is to show that if we weaken the requirement
that the lift be geometric to being unramified outside a finite set of
primes and trianguline\footnote{The reader may consult
  \cite{berger-triang} for a survey of trianguline representations; we
  give a brief sketch of the definition in \S \ref{s:bhs}.}  at all
primes above $p$, then the methods of \cite{fkp-duke} and
\cite{fkp-invent} can be adapted to apply even when $G = \gln$ and $F$
is an arbitrary number field if we assume the following weaker oddness
condition:
\begin{defn} \label{d:odd} %
  $ $
  \begin{enumerate}
  \item An involution $c$ in $\gln(K)$, where $K$ is a field of
    characteristic $\neq 2$, is said to be \emph{$\glo$} if
    $|n^+(c) - n^-(c)| \leq 1$, where $n^+(c)$ (resp.~$n^-(c)$) is the
    number of eigenvalues of $\rho(c)$ which are equal to $+1$
    (resp.~$-1$).
  \item Let $F$ be a number field and $\rho: \Gamma_F \to \gln(K)$ a
    continuous representation, where $K$ is any topological field of
    characteristic $\neq 2$. We say that $\rho$ is
    \emph{$\glo$} \footnote{This is often simply called \emph{odd} in
      the literature, but we have added $\gln$ here to distinguish
      this notion from the stronger notion of oddness used in
      \cite{fkp-duke} for representations into general reductive
      groups $G$.}  if for every real place $v$ of $F$ with
    $c_v \in \Gamma_{F_v}$ the corresponding complex conjugation, the
    involution $\rho(c_v)$ is odd as in (1).
  \end{enumerate}
  
\end{defn}
All Galois representations corresponding to cohomological automorphic
forms or the cohomology (possibly with torsion coefficients) of
locally symmetric spaces for $\gln/F$ are $\glo$ when $F$ is totally
real by results of Caraiani and Le Hung \cite{caraiani-lehung}, so
there is a plethora of such representations.

Trianguline representations were introduced by Colmez
\cite{colmez-tri}, motivated by work of Kisin \cite{kisin-over} on
Galois representations attached to overconvergent modular forms, and
they are closely related to the theory of eigenvarieties. For example,
Hansen has conjectured \cite[Conjecture 1.2.3]{hansen-eig} that any
(semisimple) $\glo$ representation $\rho: \Gamma_F \to \gln(E)$ which
is unramified outside a finite set of primes and is trianguline at all
primes above $p$ arises as the representation associated (in
\cite[Theorem B]{joh-newt-ext}) to a point on one of the
eigenvarieties for $\gln/F$ constructed in \cite{hansen-eig}, so the
condition on the lifts that we impose is a natural
weakening\footnote{Almost: all crystalline, and even semi-stable,
  representations are trianguline, but not all de Rham representations
  are trianguline, see \cite{berger-triang}.} of geometricity.

For $F$ any number field and $\mc{S}$ a finite set of finite places of
$F$ we let $\Gamma_{F, \mc{S}}$ denote the Galois group of the maximal
extension of $F$ (inside a fixed algebraic closure) unramified outside
all places in $\mc{S}$ (and the infinite places).  The following
is a special case of the main result of this note.
\begin{thm} \label{t:main} %
  Let $F$ be an arbitrary number field and
  $\br:\Gamma_{F,\mc{S}} \to \gln(k)$ a $\glo$ representation. If
  $p \gg_{n,F} 0$ and $\br|_{\Gamma_{F(\zeta_p)}}$ is absolutely
  irreducible, then there exists a finite set of places
  $\mc{S}' \supset \mc{S}$ and a finite extension $E'$ of $E$ with
  ring of integers $\co'$ such that $\br$ lifts to a $\glo$
  representation $\rho: \Gamma_{F,\mc{S}'} \to \gln(\co')$ which is
  regular trianguline at all primes of $F$ above $p$. Furthermore, one
  can also ensure that $\rho(\Gamma_F)$ contains an open subgroup of
  $\mr{SL}_n(\co')$.
\end{thm}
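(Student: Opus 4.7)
The plan is to adapt the Ramakrishna--Khare--Wintenberger lifting strategy of \cite{fkp-duke} and \cite{fkp-invent} to the trianguline setting. The geometric lifting theorems in those papers have two main ingredients: a smooth local deformation condition at each $v \mid p$ given by a crystalline (or ordinary) lift, and a global Galois cohomology dimension count which closes up using full oddness at real places of a totally real $F$. The idea is to enlarge the local $p$-adic condition to a full \emph{regular trianguline} deformation condition; this simultaneously accommodates the weaker $\glo$ hypothesis in place of full oddness and allows $F$ to be an arbitrary number field, because the trianguline condition has strictly more dimensions than the crystalline one.

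First, at each $v \mid p$, I would fix a sufficiently generic refinement of $\br|_{\Gamma_{F_v}}$ and consider lifts to artinian $\co$-algebras whose associated $\fg$-modules are trianguline relative to a continuously varying deformation of this refinement, with pairwise distinct Hodge--Tate--Sen weights for each parameter character. Using the local geometry of the trianguline variety developed by Bellaïche--Chenevier, Chenevier and Hellmann (as to be described in \S \ref{s:bhs}), for $p \gg_{n,F} 0$ a generic choice of refinement yields a local deformation condition which is formally smooth of relative dimension $n^2 + [F_v : \Q_p]\binom{n+1}{2}$ over $\co$. The excess of $n \cdot [F_v : \Q_p]$ over the crystalline dimension, coming from the freedom to move Hodge--Tate--Sen weights, is precisely what will permit the global argument to succeed without any totally real hypothesis on $F$.

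With these local conditions at $v \mid p$, and the trivial (or unrestricted) condition at $v \in \mc{S}$ not above $p$, the Wiles--Greenberg formula for the difference of Selmer and dual Selmer dimensions of $\Ad \br$ becomes non-negative once one combines the $\glo$ archimedean contribution $\dim H^0(G_v, \Ad \br) \le (n^2 + 1)/2$ at each real $v$ with the excess at places above $p$; the complex places contribute neutrally. I would then iteratively enlarge $\mc{S}$ to $\mc{S}'$ by Ramakrishna primes $q_i \nmid p$, produced by Chebotarev using the absolute irreducibility of $\br|_{\Gamma_{F(\zeta_p)}}$ exactly as in \cite[\S 4]{fkp-duke}, and at each $q_i$ replace the trivial local condition by a one-dimensional smooth ``killing'' condition which annihilates a prescribed class in the dual Selmer group. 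After finitely many steps dual Selmer vanishes, the global deformation ring is formally smooth of the correct relative dimension, and one extracts an $\co'$-point $\rho$ realising the desired lift. The open image assertion then follows from the standard trick of adjoining one more Ramakrishna-type auxiliary prime at which a prescribed regular semisimple element is forced into the image, combined with residual absolute irreducibility.

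The principal obstacle I anticipate is the first step: verifying that the regular trianguline condition at $v \mid p$ is genuinely a smooth closed formal subscheme of the unrestricted local deformation space of the expected dimension, and that $\br|_{\Gamma_{F_v}}$ itself admits at least one lift satisfying it. The trianguline variety is only smooth away from a proper closed substratum of ``non-generic'' (e.g.\ non-regular or non-crystalline-like) points, so one must argue that for $p$ large relative to $n$ and $F$ a generic choice of refinement of $\br|_{\Gamma_{F_v}}$ automatically avoids this bad locus. This is exactly where the hypothesis $p \gg_{n,F} 0$ is used most seriously, and the proof reduces to a careful geometric analysis of Hellmann's trianguline deformation space at the residual point in question.
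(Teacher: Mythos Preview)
Your overall strategy and numerics are essentially those of the paper, but there is a genuine gap at precisely the point you flag as the principal obstacle, and the resolution is not the one you anticipate. The regular trianguline locus is \emph{not} cut out by a quotient of $R^{\square,\un}$: there is no ``smooth closed formal subscheme'' of the unrestricted local deformation space parametrising trianguline lifts. Triangulations live over the Robba ring and are intrinsically characteristic-zero objects, so one cannot ``fix a sufficiently generic refinement of $\br|_{\Gamma_{F_v}}$'' at the mod~$p$ level and deform it as a closed condition. The projection $\pi_1$ from the Breuil--Hellmann--Schraen trianguline variety $X^{\square}_{\tri}(\br)$ to $\mg{X}^{\square}_{\br}$ is an immersion on the regular locus, but its image is not Zariski closed, so no candidate quotient ring exists. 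Your proposed verification that ``the regular trianguline condition \ldots\ is genuinely a smooth closed formal subscheme'' would therefore fail, not for reasons of genericity but because the object does not exist.

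The paper's fix (Proposition~\ref{prop2}) is to drop the requirement that local conditions arise from quotients of $R^{\square,\un}$ and instead allow as a local condition any compact $E$-adic submanifold $U \subset \spec(R)(\co)$, not necessarily open in the generic fibre, that is stable under a small congruence subgroup of $\wh{G^{\der}}(\co)$. The cocycle groups $Z_{r,v}$ governing the fibres of the reduction maps are then built directly from the manifold structure of $U$ via Serre's $p$-adic implicit function theorem, rather than from the cotangent space of a deformation ring. Lemma~\ref{l:triang} extracts such a $U$ of the correct fixed-determinant dimension $n^2-1+[F_v:\Q_p]\bigl(\tfrac{n(n+1)}{2}-1\bigr)$ from $U^{\square}_{\tri}(\br)^{\reg}$. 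Once this is in place, the rest of your outline---the Wiles--Greenberg count using the $\glo$ bound at real places, enlargement by auxiliary primes, and the open-image argument---matches the paper's Theorem~\ref{t:general}, specialised to Theorem~\ref{t:triang} and Corollary~\ref{c:triang}.
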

In the main text this is Corollary \ref{c:triang}. It is deduced
from Theorem \ref{t:triang} which allows one to construct trianguline
lifts under weaker hypotheses, e.g., for $\br$ which might not be
irreducible.

\smallskip

We recall that for odd irreducible representations over totally real
fields the Khare--Wintenberger method allows one to construct
geometric lifts (for many $G$) without the need for enlarging
$\mc{S}$. However, this depends on potential modularity results but
since (global) trianguline representations do not in general
correspond to classical automorphic forms it is not possible to apply
this to construct trianguline lifts.

\smallskip

Recently, a definition of trianguline representations with values in
general (connected) reductive groups $G$ has been given by Vincent de
Daruvar in his thesis \cite{daruvar}; one expects that these are
related to eigenvarieties for groups that are not forms of
$\gln$. Using his results we can extend the above theorem to general
(connected) reductive groups, see Theorem \ref{t:triang2}, with the
caveat that the main result of \emph{loc. cit.} depends on two
assumptions: the first is the existence of a sufficiently general
trianguline lift of $\br$ and the second is that $G$ should have no
factors of type $\mr{G}_2$, $\mr{F}_4$ and $\mr{E}_8$. One expects
that the first assumption always holds and the second is unnecessary.

It is an interesting question to extend the definition of trianguline
representations to disconnected groups $G$ and to prove analogues of
the results of \cite{daruvar} in this case. If this is done, the
methods of \cite{fkp-duke} and \cite{fkp-invent} should allow
one to construct global trianguline lifts also for such $G$.

We also mention that beginning with the article \cite{aip-halo}, there
has been interest in trianguline representations over $k[[t]]$, where
$k$ is a finite field; see in particular \cite[\S 1.2.3]{aip-halo}.
It is then a natural question as to whether a version of Theorem
\ref{t:main} holds with $\mc{O}$ above replaced by $k[[t]]$. We intend
to pursue this in the near future.

\subsection{}

Our proofs are based on the methods developed in \cite{fkp-duke} and
\cite{fkp-invent}, and as far as the global arguments go there
are no essential changes, except that in Section \ref{s:3} we
axiomatise in \S \ref{s:selmer} and \S\ref{s:adequate} the conditions
under which the methods of \emph{loc.~cit.} lead to the construction
of lifts, unramified outside a finite set of primes, of a very general
class of Galois representations. The main improvements are in the
local arguments, so we describe these briefly here.

In the original lifting arguments of Ramakrishna \cite{ramakrishna02}
(which are generalized in \cite{fkp-duke} and \cite{fkp-invent}) it is
assumed that one is given smooth local conditions at all primes at
which $\br$ is ramified, i.e., smooth quotients of the framed local
deformation rings, and these should have sufficiently large
dimension. One of the key improvements of this method in
\cite{fkp-duke} was that we were able to dispense with the smoothness
condition, but the dimension condition cannot be relaxed if we want
our lifts to be geometric. However, if we only require that our local
lifts be trianguline at primes dividing $p$ then the dimension
condition can indeed be relaxed, but we face the problem that
trianguline lifts are not parametrised by a quotient of the universal
framed deformation ring. \S \ref{s:cocy} is devoted to showing how we
can avoid this difficulty: in \S \ref{s:cocyc} we explain how the
method of \cite[\S 4]{fkp-duke} can be modified to deal with more
general local conditions, and in \S \ref{s:bhs} we show that this can
be applied in the setting of trianguline lifts using the properties of
the ``trianguline variety'' of Breuil--Hellmann--Schraen
\cite[Th\'eor\`eme 2.6]{bhs-int} (and the generalisation thereof in
\cite{daruvar}). Once these local improvements are in place the proofs
of the main results follow by specialising Theorem \ref{t:general} to
the case where the local lifts are assumed to be trianguline.

\smallskip

\emph{Acknowledgements.} We thank Sandeep Varma for making us aware of
\cite{daruvar} and Vincent de Daruvar for making it available to us.

\section{Cocycle constructions} \label{s:cocy}

\subsection{A general cocycle construction}  \label{s:cocyc}

Let $E$ be a finite extension of $\Q_p$, $\co$ its ring of
integers, $m_{\co} = (\vpi)$ its maximal ideal and
$k = \co/m_{\co}$ its residue field. Let $G$ be a reductive
group scheme over $\co$ (possibly disconnected), and let $\mg{g}$ be
its Lie algebra. Let $G^{\der}$ be the derived group of the identity
component of $G$ and $\fgder$ its Lie algebra.

Let $\Gamma$ be any topologically finitely generated profinite group.
Given a continuous homomorphism $\br: \Gamma \to G(k)$, there exists
a complete local Noetherian $\co$-algebra $R^{\sq,\un}$ with residue
field $k$ and a homomorphism $\rho^{\un}: \Gamma \to G(R^{\sq,\un})$
representing the functor of lifts of $\br$ on the category of complete
local Noetherian $\co$-algebras with residue field $k$. For any
such algebra $A$, we set $\wh{G}(A) := \ker(G(A) \to G(k))$. The
conjugation action of $\wh{G}(\co)$ on $G(R^{\sq,\un})$ and the
universal property of $\rho^{\un}$ induces an action of
$\wh{G}(\co)$ on $R^{\sq,\un}$.

Let $R$ be a quotient of $R^{\sq,\un}$ by an ideal which is invariant
under this action. We assume that $R$ is reduced and flat over
$\co$. We also assume that $R$ corresponds to ``fixed-multiplier''
liftings of $\br$, by which we mean that the map
$\Gamma \to G/G^{\der}(R)$ induced from $\rho^{\sq,\un}$ by the maps
$R^{\sq,\un} \to R$ and $G \to G/G^{\der}$, factors through a fixed
map $\Gamma \to G/G^{\der}(\co)$.\footnote{This is not essential, but
  it is convenient for our applications to impose this condition. The
  results below hold in general if one removes all occurrences of the
  superscript ``$\der$''.}

For any representation $\rho: \Gamma \to G(\co)$ we will denote by
$\rho_r$ the reduction of $\rho$ modulo $\vpi^r$. Also, for any
representation $\rho_r: \Gamma \to G(\co/\vpi^r)$, $\rho_r(\fgder)$
will denote $\fgder \otimes_{\co} \co/\vpi^r$ equipped with the
$\Ad \circ \rho_r$ action.  \smallskip

The following result is essentially contained in \cite[\S
4]{fkp-duke}.
\begin{prop} \label{prop1}
  Assume that $\spec(R)$ has an $\co$-valued point $y$ such that
  the corresponding point of $\spec(R[1/\vpi])$ is contained in the
  smooth locus, and let $\rho: \Gamma \to G(\co)$ be the
  corresponding lift of $\br$. Then there is an open (in the $p$-adic
  topology) subset $Y \subset \spec(R)(\co)$ with $y \in Y$ having
  the following properties: let $Y_n$ be the image of $Y$ in
  $\spec(R)(\co/\vpi^n)$ and for integers $n, r \geq 0$ let
  $\pi^Y_{n,r}: Y_{n+r} \to Y_n$ be the natural maps.
  \begin{enumerate}
  \item Given $r_0 > 0$ there exists $n_0 >0$ such that for all
    $n \geq n_0$ and $0 \leq r \leq r_0$ the fibers of $\pi_{n,r}^Y$
    are nonempty principal homogenous spaces over a submodule
    $Z_r \subset Z^1(\Gamma, \rho_r(\fgder))$ which is free over $\co/\vpi^r$ of rank
    $d$, where $d$ is the dimension of $\spec(R[1/\vpi])$ at $y$.
  \item $B^1(\Gamma, \rho_r(\fgder))
    \subset Z_r$.
  \item The $\co$-module inclusions
    $\co/\vpi^{r-1} \to \co/\vpi^r$, mapping $1$ to $\vpi$,
    and the surjections $\co/\vpi^{r} \to \co/\vpi^{r-1}$
    induce inclusions $Z_{r-1} \to Z_r$ and surjections
    $Z_r \to Z_{r-1}$.
  \item Let $L_r$ be the image of $Z_r$ in
    $H^1(\Gamma, \rho(\fgder) \otimes \co/\vpi^r)$. The groups
    $L_r$ are compatible with the maps on cohomology induced by the
    inclusions $\co/\vpi^{r-1} \to \co/\vpi^r$ and the
    surjections $\co/\vpi^r \to \co/\vpi^{r-1}$.
  \item
    $|L_r| = |\co/\vpi^r|^d\cdot |\rho_r(\fgder)^{\Gamma}|
    \cdot|\rho_r(\fgder)|^{-1} $.
\end{enumerate}
\end{prop}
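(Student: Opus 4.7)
The plan is to adapt the method of \cite[\S 4]{fkp-duke}, whose geometric starting point is the smoothness of $y$ as a point of $\spec(R[1/\vpi])$ of dimension $d$. By the Jacobian criterion I first choose $f_1, \dots, f_d \in R$ whose images in the cotangent space of $\spec(R[1/\vpi])$ at $y$ form a basis; these yield an \'etale morphism from a Zariski neighbourhood of $y$ in $\spec(R[1/\vpi])$ to $\A^d_E$. By the $p$-adic implicit function theorem (Hensel's lemma) there is then an open $p$-adic neighbourhood $Y \subset \spec(R)(\co)$ of $y$ on which $(f_1, \dots, f_d)$ identifies $Y$ with a polydisc $(\vpi^N \co)^d$ for some $N \ge 0$, and this identification is compatible with reduction modulo $\vpi^n$ for all $n > N$.

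Next I set up the cocycle dictionary. Given two lifts $\rho_{n+r}, \rho'_{n+r} : \Gamma \to G(\co/\vpi^{n+r})$ whose images in $Y_n$ agree, there is a well-defined $c(\gamma) \in \fgder \otimes \co/\vpi^r$ such that $\rho'_{n+r}(\gamma) = (1 + \vpi^n c(\gamma))\rho_{n+r}(\gamma)$, interpreted in the natural adic expansion inside $\wh{G}$; the values lie in $\fgder$ rather than all of $\mg{g}$ because of the fixed-multiplier condition, and a direct calculation shows that $c$ is a 1-cocycle with values in $\rho_r(\fgder)$. Taking $n_0 > N + r_0$, the polydisc coordinates on $Y$ put each fiber of $\pi^Y_{n,r}$ in bijection with $(\co/\vpi^r)^d$, and the cocycles so produced form a free $\co/\vpi^r$-submodule $Z_r \subset Z^1(\Gamma, \rho_r(\fgder))$ of rank $d$ under which each fiber is a torsor. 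This gives (1), and (3) is immediate from the compatibility of the polydisc identification with reductions.

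For (2), conjugating $\rho_{n+r}$ by $1 + \vpi^n g \in \wh{G}(\co/\vpi^{n+r})$ with $g \in \fgder \otimes \co/\vpi^r$ yields a second lift in the same fiber of $\pi^Y_{n,r}$ (after possibly shrinking $Y$ or enlarging $n_0$ so that all such conjugates remain in the $p$-adic neighbourhood), whose associated cocycle is the coboundary $\gamma \mapsto g - \Ad(\rho_r(\gamma))g$; thus $B^1(\Gamma, \rho_r(\fgder)) \subset Z_r$. Claim (4) is immediate from (3) by passing to $L_r = Z_r/B^1(\Gamma, \rho_r(\fgder))$, and (5) follows by combining $|Z_r| = |\co/\vpi^r|^d$ with the standard identity $|B^1(\Gamma, \rho_r(\fgder))| = |\rho_r(\fgder)|/|\rho_r(\fgder)^\Gamma|$.

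The main obstacle is the uniform form of (1): one must choose $n_0$ so that for \emph{every} $n \ge n_0$ and $0 \le r \le r_0$ the \'etale coordinates at $y$ simultaneously yield a bijection between the fibers of $\pi^Y_{n,r}$ and the intended rank-$d$ module of cocycles, and also so that all coboundary perturbations by $\wh{G}$ remain inside $Y_{n+r}$. The delicate point is that $R$ itself need not be smooth at $y$, only after inverting $\vpi$, which is precisely why one must work in a $p$-adic neighbourhood rather than a Zariski neighbourhood and follow the shape of the argument in \cite[\S 4]{fkp-duke} rather than applying a direct infinitesimal lifting.
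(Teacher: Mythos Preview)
Your proposal is correct and follows essentially the same approach as the paper: the paper's proof is simply a one-line citation stating that \cite[Proposition 4.7]{fkp-duke} (with \cite[Lemma 4.5]{fkp-duke} for (2)) goes through verbatim in this generality, and what you have written is a faithful sketch of that argument. The only cosmetic difference is that the construction in \cite{fkp-duke} is phrased in terms of a formally smooth presentation $A \twoheadrightarrow R$ and the identification of $Z_r$ with a submodule of $\hom_{\co}(\Omega_{A/\co}\otimes_{A,y}\co,\co/\vpi^r)$ (invoking Serre's $p$-adic manifold lemma), whereas you invoke the Jacobian criterion and the $p$-adic implicit function theorem directly; these are equivalent packagings of the same local structure.
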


\begin{proof}
  This is proved in \cite[Proposition 4.7]{fkp-duke} (see
  \cite[Lemma 4.5]{fkp-duke} for (2)) for $\Gamma$ the absolute
  Galois group of a local field and some specific choices of rings $R$
  for which the dimension $d$ is also known, but the proof given there
  works without any changes under the conditions that we have
  given. In particular, the proof of the formula for $|L_r|$ there
  extends to give the formula in (5).
\end{proof}

The goal of this subsection is to formulate a generalisation of
Proposition \ref{prop1} which will be the key to our construction of
trianguline lifts in \S\ref{s:lifts}, allowing us to circumvent the fact that there is no quotient of $R^{\square, \un}$ parametrizing the trianguline lifts of $\br$. In the next Proposition, $R$ is as above, but we emphasize that the manifold $U$ will no longer be assumed open in $\spec(R[1/\vpi])$. For $m > 0$, let
$(\wh{G^{\der}})^{(m)}(\co) = \ker(G^{\der}(\co) \to
G^{\der}(\co/\vpi^m))$. We then have the following: 

\begin{prop} \label{prop2}
  Let $U$ be a compact $E$-adic manifold of some dimension $d$ contained in
  the smooth locus of $\spec(R[1/\vpi])(E)$ with $v \in U$, and let
  $\rho$ be the lift of $\br$ corresponding to $v$.  Assume that there
  exists $m \gg 0$ such that the $(\wh{G^{\der}})^{(m)}(\co)$-orbit of
  $v$ is contained in $U$.  Then there is an open (in the $p$-adic
  topology) subset $V \subset U$ with $v \in V$ having the following
  properties: let $V_n$ be the image of $V$ in $\spec(R)(\co/\vpi^n)$
  and for integers $n, r \geq 0$ let $\pi^V_{n,r}: V_{n+r} \to V_n$ be
  the natural maps.
  \begin{enumerate}
  \item Given $r_0 > 0$ there exists $n_0 >0$ such that for all
    $n \geq n_0$ and $0 \leq r \leq r_0$ the fibers of $\pi_{n,r}^V$
    are nonempty principal homogenous spaces over a submodule
    $Z_r \subset Z^1(\Gamma, \rho(\fgder) \otimes_{\co}
    \co/\vpi^r)$ which is free of rank $d$ over
    $\co/\vpi^r$.
  \item $B^1(\Gamma, \rho(\fgder) \otimes_{\co} \co/\vpi^n)
    \subset Z_r$.
  \item The $\co$-module inclusions
    $\co/\vpi^{r-1} \to \co/\vpi^r$, mapping $1$ to $\vpi$,
    and the surjections $\co/\vpi^{r} \to \co/\vpi^{r-1}$
    induce inclusions $Z_{r-1} \to Z_r$ and surjections
    $Z_r \to Z_{r-1}$.
  \item Let $L_r$ be the image of $Z_r$ in
    $H^1(\Gamma, \rho(\fgder) \otimes \co/\vpi^r)$. The groups
    $L_r$ are compatible with the maps on cohomology induced by the
    inclusions $\co/\vpi^{r-1} \to \co/\vpi^r$ and the
    surjections $\co/\vpi^r \to \co/\vpi^{r-1}$.
  \item
     $|L_r| = |\co/\vpi^r|^d\cdot |\rho_r(\fgder)^{\Gamma}|
    \cdot |\rho_r(\fgder)|^{-1} $.
\end{enumerate}
\end{prop}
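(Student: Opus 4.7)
The plan is to adapt the proof of \cite[Proposition 4.7]{fkp-duke} that establishes Proposition \ref{prop1}. That argument rests on two geometric inputs: a smooth $p$-adic ball of lifts of $\br$ from which the cocycle modules $Z_r$ are extracted, and the stability of $\spec(R)(\co)$ under the conjugation action of $\wh{G^{\der}}(\co)$, which forces coboundaries to lie in $Z_r$. In the present setting the first input is supplied by the hypothesis that $U$ is a smooth $E$-manifold of dimension $d$ inside the smooth locus of $\spec(R[1/\vpi])(E)$, while the second input must be replaced by the weaker $(\wh{G^{\der}})^{(m)}(\co)$-invariance hypothesis.

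First I would shrink $U$ to a compact $p$-adic open subset $V$ containing $v$ on which a well-behaved chart exists: since $U$ is a $d$-dimensional $E$-manifold near $v$, one can choose $V$ small enough to be identified with a closed ball in $E^d$ and such that every $v' \in V$ gives rise to a lift $\rho_{v'}$ agreeing with $\rho$ modulo an arbitrarily prescribed power of $\vpi$. With $V$ in hand, the construction of $Z_r$ proceeds word-for-word as in \emph{loc.~cit.}: for $v' \in V$ sufficiently close to $v$, the map $\gamma \mapsto \rho_{v'}(\gamma)\rho(\gamma)^{-1}$, read off modulo $\vpi^{n+r}$ and passed through the identification between the $n$-th congruence kernel in $G^{\der}(\co/\vpi^{n+r})$ and $\fgder \otimes \vpi^n\co/\vpi^{n+r}$, produces an element of $Z^1(\Gamma, \rho(\fgder) \otimes \co/\vpi^r)$; running $v'$ through a fibre of $\pi^V_{n,r}$ yields the principal homogeneous space structure of (1) and defines $Z_r$ as a free $\co/\vpi^r$-module of rank $d$. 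Properties (3), (4), and the cardinality formula (5) then follow by the same bookkeeping as in the proof of Proposition \ref{prop1}, using that $V$ is a $d$-dimensional $p$-adic ball in the chart and that the fibres of $Z_r \to L_r$ are controlled by the $\wh{G^{\der}}(\co/\vpi^r)$-action on lifts.

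The only genuinely new point is (2), and this is where the invariance hypothesis intervenes. A coboundary corresponding to $X \in \fgder$ is realised as the difference cocycle attached to the conjugate lift $g \rho g^{-1}$ for $g = \exp(\vpi^n X) \in (\wh{G^{\der}})^{(n)}(\co)$ with $n$ large; in order for this cocycle to lie in $Z_r$ one needs the corresponding conjugate of $v$ to remain in $V$. The hypothesis places the $(\wh{G^{\der}})^{(m)}(\co)$-orbit of $v$ inside $U$, and by continuity of the conjugation action together with compactness of that orbit one can, after possibly enlarging $m$ or further shrinking $V$, force the orbit to lie inside $V$ itself. The main obstacle is therefore the simultaneous calibration of $m$, the constant $n_0$ governing the chart, and the radius of $V$, so that the chart description, the fibre computations underlying (1) and (3)--(5), and the containment of the relevant coboundary orbits in $V$ are all valid at once; this is routine by the compactness involved but is the only step that genuinely uses the new hypothesis.
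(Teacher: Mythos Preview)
Your approach follows the same broad lines as the paper's: extract the $Z_r$ from the fibre structure of the reduction maps on a small ball $V \subset U$, and use the $(\wh{G^{\der}})^{(m)}(\co)$-orbit hypothesis to force coboundaries into $Z_r$. You correctly identify (2) as the place where the new hypothesis does its work.

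However, there is one genuine gap. You claim that (3) follows by the ``same bookkeeping'' as in Proposition~\ref{prop1}, but this is precisely where the paper parts ways with that argument. In the proof of Proposition~\ref{prop1} (i.e., \cite[Proposition 4.7]{fkp-duke}), the injectivity in (3) comes from \cite[Lemma 4.4]{fkp-duke}, which uses that the locus in question is the $\co$-points of an actual quotient of the lifting ring. Here $U$ is only an $E$-adic submanifold of $\spec(R[1/\vpi])(E)$, not the $\co$-points of a quotient of $R$, so there is no analogue of that lemma. The paper instead goes back into the proof of \cite[Lemma 4.3]{fkp-duke}: one chooses a formally smooth $\co$-algebra $A$ surjecting onto $R$, realizes the $Z_r$ as submodules of $\hom_{\co}(\Omega_{A/\co} \otimes_{A,v} \co, \co/\vpi^r)$ by cutting $U$ out via local equations inside the manifold $\spec(A)(\co)$ (thereby reducing to a formally smooth quotient $R'$ of $A$), and then reads off the injectivity of $Z_{r-1} \to Z_r$ from the corresponding statement for $R'$. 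Your sketch does not explain how your chart on $V$ interacts with the mod $\vpi^n$ reduction maps well enough to yield this injectivity, and ``word-for-word as in loc.\ cit.'' does not cover it.

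A smaller organizational difference: the paper first applies Proposition~\ref{prop1} at $v$ to produce an open $Y \subset \spec(R)(\co)$ together with its cocycle modules $Z_r^Y$, then replaces $U$ by $U \cap Y$ and invokes Serre's result \cite[Proposition 11]{serre:cebotarev} (as in \cite[Lemma 4.3]{fkp-duke}) to obtain $V$ and submodules $Z_r \subset Z_r^Y$. This nesting makes it transparent that the $Z_r$ land inside $Z^1(\Gamma, \rho_r(\fgder))$ and reduces most of (1)--(5) to the earlier proposition; your direct chart construction can be made to work, but it is not literally ``word-for-word'' the argument of Proposition~\ref{prop1}.
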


\begin{proof}
  Fix $r_0$. We then apply Proposition \ref{prop1} by taking $y=u$
  there and get an open set $Y \subset \spec(R[1/\vpi])(\co)$ with
  $u \in Y$, an integer $n_0^Y$ and cocycles $Z_r^Y$ satisfying all
  the conclusions there.

  Replacing $U$ by $U \cap Y$ we may assume that $U \subset Y$. By
  applying the result of Serre \cite[Proposition 11]{serre:cebotarev}
  to the inclusion $v \in U$ in the same way as in the proof of
  \cite[Lemma 4.3]{fkp-duke} we get an open submanifold
  $V \subset U$ with $v \in V$ and free $\co/\vpi^r$ submodules
  $Z_r \subset Z_r^Y$ with the property in (1), with the proviso that
  the $n_0$ associated to $r_0$ might be greater than the $n_0^Y$
  obtained as an output of Proposition \ref{prop1}. (The inclusion
  $Z_r \subset Z_r^Y$ follows from the definitions since $U \subset Y$
  so each $U_n \subset Y_n$.)

  Using the assumption on the  $(\wh{G^{\der}})^{(m)}(\co)$-orbit
  of $v$ we see that (2) holds in the same way as in the proof of
  \cite[Lemma 4.5]{fkp-duke}.

  The surjectivity part of (3) is clear from the defining property of
  the $Z_r$'s. To prove the injectivity part, we must examine the
  proof of \cite[Lemma 4.3]{fkp-duke} (since there is no analogue of
  \cite[Lemma 4.4]{fkp-duke} in the present setting) from which we
  obtain the $Z_r$. That proof proceeds by first choosing a formally
  smooth complete Noetherian $\co$-algebra $A$ and a surjection of
  $\co$-algebras $A \to R$. The $Z_r$ (corresponding to $U$) are
  then constructed as certain submodules of
  $\hom_{\co}(\Omega_{A/\co} \otimes_{A,y} \co, \co/
  \vpi^r)$ by choosing local equations for $U$ in the $E$-adic
  manifold $\spec(A)(\co)$ and reducing to the case that $U$
  corresponds to the $E$-valued points of a formally smooth quotient
  $R'$ of $A$. It is an immediate consequence of this construction
  that the map
  \[
     \hom_{\co}(\Omega_{A/\co} \otimes_{A,y} \co, \co/
  \vpi^{r-1}) \to \hom_{\co}(\Omega_{A/\co} \otimes_{A,y} \co, \co/
  \vpi^{r})
\]
induced by the inclusion $\co/\vpi^{r-1} \to \co/\vpi^r$
maps $Z_{r-1}$ injectively into $Z_r$ (since this is true when $A$ is
replaced by $R'$). The map $A \to R$ induces compatible (with respect
to $\co/\vpi^{r-1} \to \co/\vpi^r$) inclusions
$\hom_{\co}(\Omega_{R/\co} \otimes_{R,y} \co, \co/
\vpi^{r}) \to \hom_{\co}(\Omega_{A/\co} \otimes_{A,y} \co,
\co/ \vpi^{r})$ from which (3) follows once we identify
$\hom_{\co}(\Omega_{R/\co} \otimes_{A,y} \co, \co/
\vpi^{r})$ with a submodule of
$Z^1(\Gamma, \rho(\fgder) \otimes \co/\vpi^r)$ as in \cite[\S
4.2]{fkp-duke}.

Finally, (4) follows immediately from (3) and the definition of
$L_r$ and (5) follows as in Proposition \ref{prop1}.
\end{proof}

\subsection{Cocyles for trianguline deformations} \label{s:bhs}

Let $F$ be a finite extension of $\Q_p$ and let
$\Gamma_F = \gal(\ov{F}/F)$ be its absolute Galois group.  In this
subsection we recall some facts about trianguline representations of
$\Gamma_F$ and carry out the local analysis needed in order to be able
to construct the cocyles used to prove the existence of global lifts
of $\glo$ representations which are trianguline at primes above
$p$. For a general introduction to trianguline representations the
reader may consult \cite{berger-triang} (and also \cite{daruvar} for
the $G$-valued case), but we will only use the existence and some
properties of the rigid-analytic variety of (local) trianguline lifts
from \cite{bhs-int} (and \cite{daruvar} for the $G$-valued
case). However, to orient the reader not familiar with this notion, we
mention briefly that trianguline representations are defined by
embedding the category of $E$-adic representations of $\Gamma_F$ (for
$E/\Q_p$ a finite extension) fully faithfully into a larger category,
the category of $(\phi, \Gamma)$-modules, which are modules over a
(Robba) ring $\mc{R}$ with some extra structure. A Galois
representation is said to be trianguline if the corresponding
$(\phi, \Gamma)$-module has a filtration by subobjects such that the
successive subquotients are free of rank one as $\mc{R}$-modules; the
parameter $\delta$ occurring below corresponds to the ordered tuple of
these subquotients. The condition for a $G$-valued representation to
be trianguline is formulated in Tannakian terms using the tensor
structure on the category of $(\phi, \Gamma)$-modules or in the
language of principal $G$-bundles over $\mc{R}$ (with extra
structure).

\smallskip

\subsubsection{}
We will first consider the case of $\gln$ since the results for
general $G$ are slightly weaker and partly conditional.  So let
$G = \gln$ and let $\br: \Gamma_F \to G(k)$ be as in \S
\ref{s:cocyc}. Let $\mg{X}^{\sq}_{\br}$ be the rigid analytic space
over $E$ corresponding to the formal scheme $\spf(R^{\sq,\un})$, where
$R^{\sq,\un}$ is the universal lifting ring of $\br$; its $E$-valued
points are canonically equal to the $\co$-valued points of
$\spec(R^{\sq,\un})$ and similarly for all finite exensions of
$E$. Let $\mc{T}$ denote the rigid analytic space over $\Q_p$
parametrising the continuous characters of $F^{\times}$. Let
$X^{\sq}_{\tri}(\br) \subset \mg{X}^{\sq}_{\br} \times \mc{T}_E^n$ be
the space of trianguline deformations of $\br$ as in
\cite[D\'efinition 2.4]{bhs-int} (but our notation is slightly
different). It is a Zariski closed rigid analytic subvariety of
$\mg{X}^{\sq}_{\br} \times \mc{T}_E^n$ defined as the Zariski closure
of a subset $U^{\sq}_{\tri}(\br)^{\reg}$. The points $(x, \delta)$ of
the latter consist of certain trianguline lifts $x$ of $\br$ together
with a system of parameters $\delta$ of a triangulation of the
associated $(\phi, \Gamma)$-module; the assumption is that $\delta$ is
regular in the sense explained in the paragraph before
\cite[D\'efinition 2.4]{bhs-int}. For our purposes what is important
is that $x$ corresponds to a trianguline lift, and the precise nature
of $\delta$ plays no explicit role.  We call the trianguline lifts of
$\br$ which correspond to points of $X^{\square}_{\mr{tri}}(\br)$
\emph{good}; we do not know whether all trianguline lifts are good.

The following result of Breuil, Hellmann and Schraen is the key input
for the construction of cocyles for trianguline lifts.
\begin{lem} \label{l:bhs}
\hspace{-1cm}\begin{enumerate}
\item The space $X^{\sq}_{\tri}(\br)$ is non-empty and equidimensional
  of dimension $n^2 + [F:\Q_p]\tfrac{n(n+1)}{2}$.
\item  $U^{\sq}_{\tri}(\br)^{\reg}$ is a smooth Zariski open subvariety of  $X^{\sq}_{\tri}(\br)$
  which is Zariski dense.
\item The projection map
  $\pi_1:X^{\sq}_{\tri}(\br) \to \mg{X}^{\sq}_{\br}$ is an immersion
  at all points of $U^{\sq}_{\tri}(\br)^{\reg}$.
\end{enumerate}
\end{lem}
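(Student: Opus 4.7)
The assertion is essentially \cite[Th\'eor\`eme 2.6]{bhs-int}, so the plan is to follow their construction via the theory of $(\phi,\Gamma)$-modules over the Robba ring $\mc{R}_E$.

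First, I would construct $U^{\sq}_{\tri}(\br)^{\reg}$ directly as a smooth rigid analytic variety, independently of the Zariski closure description. The construction proceeds by induction on the rank $n$: a regular trianguline $(\phi,\Gamma)$-module $D$ of rank $n$ with parameter $\delta = (\delta_1,\dots,\delta_n)$ is described as an extension of $\mc{R}_E(\delta_n)$ by a regular trianguline $(\phi,\Gamma)$-module of rank $n-1$ (with parameter $(\delta_1,\dots,\delta_{n-1})$), together with a framing of the underlying $\Gamma_F$-representation lifting that of $\br$. At each inductive step the space of extensions is governed by $\mr{Ext}^1_{(\phi,\Gamma)}$, and the key input is the Euler--Poincar\'e formula $\chi = -[F:\Q_p]\,\mr{rk}$ for $(\phi,\Gamma)$-modules combined with the vanishing of $H^0$ and $H^2$ between rank-one pieces at regular parameters (indeed, the regularity condition on $\delta$ is designed precisely to force these vanishings). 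The contributions of the framing, the $n([F:\Q_p]+1)$-dimensional character space $\mc{T}_E^n$, and the iterated extension data should add up to the asserted total dimension $n^2 + [F:\Q_p]\tfrac{n(n+1)}{2}$, with smoothness of each step coming from the vanishing of the relevant $H^2$.

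With $U^{\sq}_{\tri}(\br)^{\reg}$ in hand as a smooth equidimensional rigid space, (1) follows by defining $X^{\sq}_{\tri}(\br)$ to be its Zariski closure (which is then automatically equidimensional of the same dimension) and reducing non-emptiness to the existence of a single regular trianguline lift of $\br$; for $p \gg n$ such a lift can be produced as a crystalline lift of $\br$ with sufficiently generic Hodge--Tate weights and Frobenius eigenvalues, whose existence is standard. Part (2) is then immediate from the construction. For (3), I would argue that at a regular point the parameter $\delta$ is rigidly determined by the underlying $(\phi,\Gamma)$-module, so $\pi_1$ is injective on points, and the analogous infinitesimal statement---that a deformation of the trianguline data which fixes the underlying representation must be trivial---gives injectivity of the differential.

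The hard part will be the technical setup required to vary $(\phi,\Gamma)$-module cohomology in rigid-analytic families, relying on the finiteness and base-change results of Kedlaya--Pottharst--Xiao and Liu, in order to make the inductive chart construction smoothly cover the whole regular locus and to verify that the charts glue into a rigid subvariety of $\mg{X}^{\sq}_{\br} \times \mc{T}_E^n$ whose Zariski closure has the expected dimension at every point; the individual dimension count at a single regular point is, by contrast, a routine Euler-characteristic exercise once the cohomological machinery is in place.
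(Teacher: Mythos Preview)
Your outline correctly identifies the lemma as a restatement of \cite[Th\'eor\`eme 2.6]{bhs-int} and sketches the inductive extension-class construction that underlies their proof; the paper's own proof is simply a citation to that theorem, together with two remarks on where the harder inputs come from. In that sense your approach is the same, only more expansive.

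There is, however, a genuine gap in your treatment of non-emptiness. You write that a regular crystalline lift of an arbitrary $\br$ ``is standard'' for $p \gg n$. It is not: the existence of a crystalline lift of an \emph{arbitrary} local $\br:\Gamma_F\to\gln(k)$ (with no hypothesis such as ordinarity, semisimplicity, or Fontaine--Laffaille range) is a deep recent result of Emerton--Gee \cite{emerton-gee:moduli}, and the paper explicitly flags this as the non-trivial input. Moreover, inserting the hypothesis $p \gg n$ does not actually make the problem easy (the difficulty is the possibly complicated extension structure of $\br$, not the size of $p$), and in any case the lemma is stated without any restriction on $p$, so your version would not establish it as written.

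For part (3), your heuristic that the regular parameter is rigidly determined by the underlying representation is the right idea, but note that the paper attributes the precise argument to \cite[\S 2.3]{bc-selmer}; you would need to check that the infinitesimal injectivity genuinely follows from the regularity condition on $\delta$, which is exactly the content proved there.
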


\begin{proof}
  This is a part of \cite[Th\'eor\`eme 2.6]{bhs-int}. The third part
  is not contained in the statement there but is used in the proof,
  where it is deduced from results of Bella\"iche and Chenevier
  \cite[\S 2.3]{bc-selmer}.  We note that the proof of non-emptiness
  of $U^{\sq}_{\tri}(\br)^{\reg}$ in \cite{bhs-int} uses the existence
  of regular crystalline lifts of $\br$; this is a highly non-trivial
  result for general $\br$ and was proved only recently in
  \cite{emerton-gee:moduli}.
\end{proof}

The group $\wh{\gln}(\co)$ acts on
$\mg{X}^{\sq}_{\br} \times \mc{T}_E^n$ via its action on the first
factor and this action preserves $U^{\sq}_{\tri}(\br)^{\reg}$ and $X^{\sq}_{\tri}(\br)$.

For our application we need to work with lifts of $\br$ which have a
fixed determinant. To arrange this we consider the morphism
$\det: \mg{X}^{\sq}_{\br} \to \mg{X}^{\sq}_{\det(\br)}$ which sends
any lift of $\br$ to its determinant and the induced morphism
$\det_1:= \det \circ \pi_1:X^{\sq}_{\tri}(\br) \to
\mg{X}^{\sq}_{\det(\br)}$. The space $\mg{X}^{\sq}_{\det(\br)}$ has
dimension $1 + [F:\Q_p]$ and the morphism $\det_1$ commutes with the
action of $\wh{\gln}(\co)$, where the action on
$\mg{X}^{\sq}_{\det(\br)}$ is taken to be the trivial action.

If $\chi$ is a character of $\Gamma_F$ with values in a finite
extension $E'$ of $E$ with trivial reduction modulo its maximal ideal,
then tensor product with $\chi$ induces an automorphism of
$X^{\sq}_{\tri}(\br)_{E'}$ which preserves
$U^{\sq}_{\tri}(\br)^{\reg}$. This is compatible with the morphism
$\det_1$ if we let $\chi$ act on $\mg{X}^{\sq}_{\det(\br)}$ by tensor
product with $\chi^{\otimes n}$.

If $p \nmid n$, then any character $\chi$ as above has an $n$-th
root. This implies that in this case all the fibres of $\det_1$ over
$\mg{X}^{\sq}_{\det(\br)}(E')$ are isomorphic, for any finite
extension $E'$ of $E$. It follows that for any point $y$ of
$\mg{X}^{\sq}_{\det(\br)}$,
$U^{\sq}_{\tri}(\br)^{\reg} \cap (\det_1)^{-1}(y)$ is Zariski dense in
$(\det_1)^{-1}(y)$.

\begin{lem} \label{l:triang} Let $\br: \Gamma_F \to \gln(k)$ be a
  continuous homomorphism, and let $\rho:\Gamma_F \to \gln(\co)$ be a
  lift of $\br$ corresponding to a point
  $(x,\delta) \in X^{\sq}_{\tri}(\br)(E)$. Assume $p \nmid
  n$. Then, after replacing $E$ by a finite extension if necessary,
  the following holds:
  \begin{enumerate}
  \item There is a quotient $R$ of $R^{\sq,\un}$ by an ideal invariant
    under the action of $\wh{\gln}(\co)$ which is reduced and
    flat over $\co$.
  \item There is a compact $E$-adic submanifold $U$ of
    $\spec(R[1/\vpi])(E)$ of dimension
    $d = n^2 -1 + [F:\Q_p]\bigl (\tfrac{n(n+1)}{2} - 1 \bigr )$ such
    that all the points of $U$ correspond to regular trianguline lifts
    $\rho'$ of $\br$ with $\det(\rho) = \det(\rho')$.
  \item Given any integer $N> 0$, $U$ can be chosen such that all
    $\rho'$ corresponding to points of $U$ are congruent to $\rho$
    modulo $\vpi^N$.
  \item There is a point $v \in U$ and an integer $m > 0$ such that
    the $(\wh{\gln})^{(m)}(\co)$-orbit of $v$ is contained in
    $U$.
  \end{enumerate}
\end{lem}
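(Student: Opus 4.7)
The plan is to identify $R$ with the quotient of $R^{\sq,\un}$ cut out by the Zariski closure in $\mg{X}^{\sq}_{\br}$ of a suitable determinant-fiber of $X^{\sq}_{\tri}(\br)$, and to extract the manifold $U$ by pushing a small $p$-adic ball in the smooth locus of that fiber through $\pi_1$.

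Set $y = \det(\rho) \in \mg{X}^{\sq}_{\det(\br)}(E)$. The discussion preceding the lemma---using $p \nmid n$ to extract $n$-th roots of characters---shows that $U^{\sq}_{\tri}(\br)^{\reg} \cap \det_1^{-1}(y)$ is Zariski dense in $\det_1^{-1}(y)$ and that all fibers of $\det_1$ are isomorphic, hence equidimensional of dimension
\[
\dim X^{\sq}_{\tri}(\br) - \dim \mg{X}^{\sq}_{\det(\br)} = n^2 - 1 + [F:\Q_p]\Bigl(\tfrac{n(n+1)}{2}-1\Bigr) = d.
\]
Let $Y \subset \mg{X}^{\sq}_{\br}$ be the Zariski closure of $\pi_1(\det_1^{-1}(y))$, endowed with its reduced structure. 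Since $\det_1$ is $\wh{\gln}(\co)$-equivariant with trivial action on the target, both $\det_1^{-1}(y)$ and $Y$ are $\wh{\gln}(\co)$-invariant; the saturated ideal of $Y$ in $R^{\sq,\un}$ then defines a reduced, $\co$-flat, $\wh{\gln}(\co)$-stable quotient $R$ with $\spec(R[1/\vpi]) = Y$, proving (1).

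For (2) and (3), the space $\mg{X}^{\sq}_{\det(\br)}$ is smooth (essentially the residue disk around $\det(\br)$ in the smooth character variety $\mc{T}$ of $\Gamma_F$), so by generic smoothness $\det_1|_{U^{\sq}_{\tri}(\br)^{\reg}}$ is smooth on a Zariski open dense subset; the same twisting argument propagates this locus to every fiber, producing a smooth point of $U^{\sq}_{\tri}(\br)^{\reg} \cap \det_1^{-1}(y)$. After enlarging $E$ if necessary, and using that $E$-points of a Zariski open dense subvariety of a reduced rigid $E$-variety are $p$-adically dense among $E$-points, I choose such a smooth point $(x',\delta')$ with $x' \equiv x \pmod{\vpi^N}$ in $\mg{X}^{\sq}_{\br}(E)$. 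Take a small compact $E$-adic ball $W$ around $(x',\delta')$ in the smooth $d$-dimensional manifold $U^{\sq}_{\tri}(\br)^{\reg} \cap \det_1^{-1}(y)$ near $(x',\delta')$, chosen small enough that every point of $\pi_1(W)$ is congruent to $x$ modulo $\vpi^N$; set $U := \pi_1(W)$ and $v := \pi_1(x',\delta') = x'$. By Lemma \ref{l:bhs}(3), $\pi_1$ is an immersion along $U^{\sq}_{\tri}(\br)^{\reg}$, so $U$ is a compact $E$-adic submanifold of $Y(E) = \spec(R[1/\vpi])(E)$ of dimension $d$ whose points parametrise regular trianguline lifts of $\br$ with determinant $\det(\rho)$, all congruent to $\rho$ modulo $\vpi^N$.

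For (4), the $\wh{\gln}(\co)$-action on $\mg{X}^{\sq}_{\br} \times \mc{T}_E^n$ is rigid-analytic and preserves $U^{\sq}_{\tri}(\br)^{\reg} \cap \det_1^{-1}(y)$, so by continuity at the identity, for $m$ sufficiently large $(\wh{\gln})^{(m)}(\co) \cdot (x',\delta') \subset W$; the $\wh{\gln}(\co)$-equivariance of $\pi_1$ then gives $(\wh{\gln})^{(m)}(\co) \cdot v \subset U$, as required. The main technical obstacle I anticipate lies in the $p$-adic approximation step: one must upgrade Zariski density of the smooth fiber locus $U^{\sq}_{\tri}(\br)^{\reg} \cap \det_1^{-1}(y)$ in $\det_1^{-1}(y)$ to $p$-adic density of its $E'$-points (after enlarging $E$), and simultaneously propagate generic smoothness of $\det_1$ to every fiber---both of which rely essentially on the hypothesis $p \nmid n$.
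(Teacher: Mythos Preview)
Your argument follows the same outline as the paper's: restrict to the determinant-fiber, use the twisting action (enabled by $p \nmid n$) to see that all fibers are isomorphic and hence of pure dimension $d$, approximate $(x,\delta)$ by a point $(x',\delta')$ in the smooth regular locus of that fiber, push a small compact ball through the immersion $\pi_1$, and use continuity of the $\wh{\gln}(\co)$-action for (4). The approximation step you flag as the main technical obstacle is exactly what the paper handles by invoking \cite[Lemma~4.9]{fkp-duke} (at the cost of a finite extension of $E$).

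The one substantive difference is the choice of $R$. You take $R$ to be cut out by the Zariski closure in $\mg{X}^{\sq}_{\br}$ of $\pi_1$ of the \emph{entire} fiber $\det_1^{-1}(y)$. The paper instead first fixes the small ball $U'$ in the fiber, forms $\wt{U'} = \wh{\gln}(\co)\cdot U'$, and defines $R$ via the Zariski closure of $\pi_1(\wt{U'})$. The reason for this more parsimonious choice is that the paper then argues---using reducedness of $R$ and Zariski density of $\pi_1(\wt{U'})$---that some point of $\pi_1(\wt{U'})$ is a smooth point of $\spec(R[1/\vpi])$, moves it back into $U'$ by equivariance, and shrinks $U'$ so that $U = \pi_1(U')$ lies entirely in the smooth locus of $\spec(R[1/\vpi])$. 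That smooth-locus property is not recorded in the statement of the lemma, but it is precisely the hypothesis of Proposition~\ref{prop2}, which is where the lemma is applied. With your larger $R$ you have not checked this: your $U$ is a $d$-dimensional $E$-adic submanifold of $Y(E)$, but a priori $Y$ could have several branches through points of $U$. The repair is straightforward (intersect the regular fiber locus with $\pi_1^{-1}$ of the smooth locus of $Y$ before choosing $(x',\delta')$, which is nonempty since both are Zariski dense), but it should be made explicit.
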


\begin{proof}
  By replacing $E$ with a finite extension if necessary, we may (and
  do) ensure that $U^{\sq}_{\tri}(\br)^{\reg}(E) \neq \emptyset$, so
  by Lemma \ref{l:bhs} it has a natural structure of an $E$-adic
  manifold of dimension $n^2 + [F:\Q_p]\tfrac{n(n+1)}{2}$. Since the
  space $\mg{X}^{\sq}_{\det(\br)}$ has dimension $1 + [F:\Q_p]$ and
  all (non-empty) fibres of $\det_1$ are geometrically isomorphic
  (since $p \nmid n$), it follows from the results of \cite[\S
  III.10]{serre-lalg} that
  $\mc{F} := (\det_1)^{-1}(\det_1((x,\delta)))$ is of pure dimension
  $d = n^2 -1 + [F:\Q_p]\bigl (\tfrac{n(n+1)}{2} - 1 \bigr )$ and has
  a smooth Zariski dense open subset $\mc{F}'$ consisting of points in
  $U^{\sq}_{\tri}(\br)^{\reg}$.

  Now using \cite[Lemma 4.9]{fkp-duke}\footnote{The lemma as stated
    does not quite apply here, but its proof gives what we
    claim.}---this might again require replacing $E$ with a finite
  extension---we can find a sequence of points
  $(x_N', \delta_N') \in \mc{F}'(E)$, $N=1,2,\dots$, such that the
  lift of $\br$ corresponding to $x_N'$ is congruent to $\rho$ modulo
  $\vpi^N$.  For any such $N$, let $U' \subset \mc{F}'(E)$ be an $E$-adic neighbourhood
  of $(x'_N, \delta'_N)$ such that all the lifts of $\br$
  corresponding to points of $U'$ are congruent to $\rho$ modulo
  $\vpi^N$. By shrinking $U'$ further using (3) of Lemma \ref{l:bhs},
  we may and do assume that $\pi_1$ induces an embedding of $U'$ onto
  a compact $E$-adic submanifold $U$ of
  $\spec(R^{\sq,\un})[1/\vpi])(E)$.

  Let $\wt{U'} = \wh{\gln}(\co)\cdot U' \subset \mc{F}'(E)$ and let
  $R$ be the reduced quotient of $R^{\sq,\un}$ corresponding to the
  Zariski closure in $\spec(R^{\sq,\un})$ of $\pi_1(\wt{U'})$.  Since
  $\pi_1(\wt{U'})$ is preserved by the action of $\wh{\gln}(\co)$, the
  kernel of the quotient map $R^{\sq,\un} \to R$ is also preserved by
  this group. By the Zariski density of $\pi_1(\wt{U'})$ in $\spec(R)$
  and the reducedness of $\spec(R)$, we may find a point
  $v \in \pi_1(\wt{U'})$ which is a smooth point of
  $\spec(R[1/\vpi])$. The $\wh{\gln}(\co)$-equivariance of $\pi_1$
  implies that we may take $v = \pi_1(v')$ for some $v' \in U'$.
  Replacing $U'$ by a compact open neighbourhood of $v'$ we may
  assume that $\pi_1(U')$ is contained in the smooth locus of
  $\spec(R[1/p])(E)$.

  Letting $U = \pi_1(U')$ and $v = \pi_1(v')$, it is clear from the
  above that (1), (2) and (3) hold.  To prove (4), we note that
  $\wh{\gln}(\co)$ acts continuously on $\mc{F}'(E)$. Since $U'$ is an
  open neighbourhood of $v'$ in $\mc{F}'(E)$, and the sequence of
  subgroups $(\wh{\gln})^{(m)}(\co)$ of $\wh{\gln}(\co)$ forms a
  neighbourhood basis of the identity, for all $m \gg 0$ the
  $(\wh{\gln})^{(m)}(\co)$-orbit of $v'$ is contained in $U'$. The
  $\wh{\gln}(\co)$-equivariance of $\pi_1$ then implies that for
  all $m \gg 0$ the $(\wh{\gln})^{(m)}(\co)$-orbit of $v$ is contained
  in $U$.

\end{proof}

\begin{rem} \label{r:ord}%
  If $\br$ is upper triangular with respect to some basis, then the
  points of $U^{\sq}_{\tri}(\br)^{\reg}$ corresponding to pairs
  $(x,\delta)$ with $x$ being upper triangular with respect to some
  basis form a non-empty $\wh{\gln}(\co)$-invariant open subspace:
  this is a consequence of the inductive nature of the proof of
  existence of regular crystalline lifts in
  \cite{emerton-gee:moduli}. In this case one may arrange that all the
  points of the $U$ constructed in Lemma \ref{l:triang} correspond to
  lifts of $\br$ which are upper triangular with respect to some
  basis.
\end{rem}

\subsubsection{}
Now suppose $G$ is a connected split reductive group over a $p$-adic
field $E$. Then the notion of trianguline representations of
$\Gamma_F$ valued in $G(E)$ is defined in \cite{daruvar} using $(\phi,
\Gamma)$-modules with $G$-structure, which are defined using
Tannakian methods. We do not recall the details of the construction,
but only mention that this definition generalizes the usual definition
of trianguline representations in a natural way. The main result of
\cite{daruvar}, Theorem 6.22, is an analogue for $G$-valued
representations of \cite[Th\'eor\`eme 2.6]{bhs-int}, with some
modifications and conditions which we briefly explain.

For $\br: \Gamma_F \to G(k)$, we let $R^{\sq,\un}$ and
$\mg{X}^{\sq}_{\br}$ be as before. For $T$ a maximal split torus of
$G$ contained in a Borel subgroup $B$, we let $T^{\vee}$ be the dual
torus and we replace $\mc{T}_E^n$ by $\wh{T^{\vee}}$, the rigid
analytic space over $E$ parametrising (continuous) characters of
$T^{\vee}(F)$. Then
$X^{\sq}_{\tri}(\br) \subset \mg{X}^{\sq}_{\br} \times \wh{T^{\vee}}$
is defined as the Zariski closure of a subset
$U^{\sq}_{\tri}(\br)^{\mr{vreg}}$ of \emph{very regular} lifts. The
points of the latter correspond to pairs $(x, \delta)$ where $x$ is a
trianguline lift of $\br$ in the sense of \cite[Definition
4.9]{daruvar} with a triangulation such that the associated parameter
$\delta$ is very regular in the sense of \cite[Definition
6.1]{daruvar}. We then have the following analogue of Lemma
\ref{l:bhs}.

\begin{lem} \label{l:daruvar}
  Assume $G$ has no factors of type $\mr{G}_2$, $\mr{F}_4$ and
  $\mr{E}_8$ and $\br$ has a very regular trianguline lift. Then
\hspace{-1cm}\begin{enumerate}
\item The space $X^{\sq}_{\tri}(\br)$ is non-empty and equidimensional
  of dimension $\dim(G) + [F:\Q_p]\dim(B)$.
\item  $U^{\sq}_{\tri}(\br)^{\mr{vreg}}$ is a smooth Zariski open subvariety of  $X^{\sq}_{\tri}(\br)$
  which is Zariski dense.
\item The projection map
  $\pi_1:X^{\sq}_{\tri}(\br) \to \mg{X}^{\sq}_{\br}$ is an immersion
  at all points of $U^{\sq}_{\tri}(\br)^{\mr{vreg}}$.
\end{enumerate}
\end{lem}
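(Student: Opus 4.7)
The plan is to deduce the entire statement from \cite[Theorem 6.22]{daruvar}, which is the $G$-valued analogue of \cite[Th\'eor\`eme 2.6]{bhs-int}. Parts (1) and (2) are contained directly in Daruvar's result: under the hypothesis that $G$ has no factors of type $\mr{G}_2$, $\mr{F}_4$, $\mr{E}_8$, the trianguline variety $X^{\sq}_{\tri}(\br)$ is equidimensional of dimension $\dim(G) + [F:\Q_p]\dim(B)$, and $U^{\sq}_{\tri}(\br)^{\mr{vreg}}$ sits inside it as a smooth, Zariski open and Zariski dense subvariety. The dimension formula is the natural generalization of the $\gln$ formula $n^2 + [F:\Q_p]\tfrac{n(n+1)}{2}$, where one recognizes $n^2 = \dim(\gln)$ and $\tfrac{n(n+1)}{2} = \dim(B)$ for $B$ the Borel of upper triangular matrices.

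The role of the assumption that $\br$ admits a very regular trianguline lift is to ensure non-emptiness of $U^{\sq}_{\tri}(\br)^{\mr{vreg}}$, and hence of $X^{\sq}_{\tri}(\br)$ itself. In the $\gln$ case, this non-emptiness is supplied for free by the existence of regular crystalline lifts, proved in \cite{emerton-gee:moduli}; no such unconditional result is available for general $G$, which is why the analogous input must be assumed in the hypotheses of Lemma \ref{l:daruvar}. The exclusion of the exceptional types $\mr{G}_2$, $\mr{F}_4$, $\mr{E}_8$ is a technical condition inherited from Daruvar's construction of $(\phi,\Gamma)$-modules with $G$-structure and the Tannakian formalism used therein.

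For part (3), the assertion that $\pi_1$ is an immersion at very regular points is not isolated in the statement of \cite[Theorem 6.22]{daruvar} but is extracted from its proof, exactly as in the $\gln$ setting (cf.\ the proof of Lemma \ref{l:bhs} here, which cites the proof of \cite[Th\'eor\`eme 2.6]{bhs-int} rather than the statement). Concretely, at a very regular point $(x,\delta)$, the parameter $\delta$ is uniquely determined by the underlying $(\phi,\Gamma)$-module with $G$-structure up to finite ambiguity, so the tangent space at $(x,\delta)$ to $X^{\sq}_{\tri}(\br)$ injects into the tangent space at $x$ to $\mg{X}^{\sq}_{\br}$. This is the $G$-valued version of the Bella\"iche--Chenevier argument \cite[\S 2.3]{bc-selmer}, and the very regularity condition of \cite[Definition 6.1]{daruvar} is designed precisely so that this tangent space analysis goes through.

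The main obstacle is the verification of (3), because part (3) is not stated as an independent theorem in \cite{daruvar} and must be located inside a proof that uses the Tannakian reformulation of $(\phi,\Gamma)$-modules. One has to confirm that the injectivity of the differential of $\pi_1$---which in the $\gln$ case comes from the fact that at a regular $\delta$ the triangulation is rigid up to twisting by characters of the torus acting trivially on the underlying module---translates correctly into the principal $G$-bundle framework. Once this is in hand, parts (1), (2), (3) of the lemma follow in order from the corresponding ingredients in Daruvar's proof.
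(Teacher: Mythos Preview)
Your proposal is correct and follows essentially the same approach as the paper: parts (1) and (2) are read off directly from \cite[Theorem 6.22]{daruvar}, and part (3) is extracted from its proof rather than its statement. The only refinement is that the paper pinpoints \cite[Proposition 6.6]{daruvar} as the specific ingredient yielding the immersion property at very regular points, whereas you describe the underlying tangent-space mechanism by analogy with the Bella\"iche--Chenevier argument; both amount to the same citation.
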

It is expected that the condition on $G$ is unnnecessary, the
condition on $\br$ is always satisfied, and ``very regular'' can be
replaced by ``regular''.

\begin{proof}
  This is part of \cite[Theorem 6.22]{daruvar}, the last part being a
  consequence of the proof using \cite[Proposition 6.6]{daruvar}.
\end{proof}

As in the case of $\gln$, we would like to work with ``fixed
determinant'' lifts, where now by ``determinant'' we mean the map
$\det: G \to G/G^{\mr{\der}}$, with $G^{\mr{\der}}$ being the derived
group of $G$. This induces a map
$\det: \mg{X}^{\sq}_{\br} \to \mg{X}^{\sq}_{\det(\br)}$ which sends
any lift of $\br$ to its determinant, and we have an induced morphism
$\det_1:= \det \circ \pi_1:X^{\sq}_{\tri}(\br) \to
\mg{X}^{\sq}_{\det(\br)}$. Let $n$ be the order of the kernel of the
isogeny $Z(G) \to G/G^{\mr{\der}}$, where $Z(G)$ is the connected
centre of $G$. 
\begin{lem} \label{l:triang2} Assume $G$ has no factors of type
  $\mr{G}_2$, $\mr{F}_4$ and $\mr{E}_8$, and that $p \nmid n$. 
  Let $\br: \Gamma_F \to G(k)$ be a continuous homomorphism and 
  let $\rho:\Gamma_F \to G(\co)$ be a
  lift of $\br$ corresponding to a point
  $(x,\delta) \in X^{\sq}_{\tri}(\br)(E)$. Then, after replacing $E$
  by a finite extension if necessary, the following holds:
  \begin{enumerate}
  \item There is a quotient $R$ of $R^{\sq,\un}$ by an ideal invariant
    under the action of $\wh{G}(\co)$ which is reduced and
    flat over $\co$.
  \item There is a compact $E$-adic submanifold $U$ of
    $\spec(R[1/\vpi])(E)$ of dimension
    $d = \dim(G^{\mr{der}})  + [F:\Q_p]\bigl (\dim(B)  - \dim(Z(G)) \bigr )$ such
    that all the points of $U$ correspond to very regular trianguline
    lifts $\rho'$ of $\br$ with $\det(\rho) = \det(\rho')$.
  \item Given any integer $N> 0$, $U$ can be chosen such that all
    $\rho'$ corresponding to points of $U$ are congruent to $\rho$
    modulo $\vpi^N$.
  \item There is a point $v \in U$ and an integer $m > 0$ such that
    the $\wh{G}^{(m)}(\co)$-orbit of $v$ is contained in
    $U$.
  \end{enumerate}
\end{lem}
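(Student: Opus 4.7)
The plan is to transpose the proof of Lemma \ref{l:triang} essentially verbatim, replacing Lemma \ref{l:bhs} by Lemma \ref{l:daruvar} and $\gln$ by $G$, with the main extra issue being a group-theoretic fibre-dimension computation for the map $\det_1$. The hypothesis on $G$ (no factors of type $\mr{G}_2$, $\mr{F}_4$, $\mr{E}_8$) is imposed precisely so that Lemma \ref{l:daruvar} applies; the ``very regular trianguline lift'' hypothesis of that lemma is automatic here because the given point $(x,\delta) \in X^{\sq}_{\tri}(\br)(E)$ forces $X^{\sq}_{\tri}(\br)$ to be non-empty, and then its definition as the Zariski closure of $U^{\sq}_{\tri}(\br)^{\mr{vreg}}$ forces the latter to be non-empty after a finite extension of $E$.

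After extending $E$, I would invoke Lemma \ref{l:daruvar} to equip $U^{\sq}_{\tri}(\br)^{\mr{vreg}}$ with the structure of a smooth $E$-adic manifold of dimension $\dim(G) + [F:\Q_p]\dim(B)$, and then study the fibres of $\det_1:X^{\sq}_{\tri}(\br) \to \mg{X}^{\sq}_{\det(\br)}$. Since $G/G^{\der}$ is a torus of dimension $\dim(Z(G))$, the target has dimension $\dim(Z(G))(1+[F:\Q_p])$. Twisting a lift of $\br$ by a central character $\chi:\Gamma_F \to Z(G)(E')$ of trivial reduction gives an automorphism of $X^{\sq}_{\tri}(\br)_{E'}$ preserving $U^{\sq}_{\tri}(\br)^{\mr{vreg}}$, and changes $\det_1$ by the image of $\chi$ under the isogeny $Z(G) \to G/G^{\der}$. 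Under the hypothesis $p \nmid n$, this isogeny induces an isomorphism on formal-group-level characters (its kernel has order $n$, hence is trivial on $1+\vpi\co$-valued points), so every character $\Gamma_F \to (G/G^{\der})(E')$ of trivial reduction lifts to $Z(G)$. It follows that all non-empty fibres of $\det_1$ over $E'$-points are isomorphic, and combining this with \cite[\S III.10]{serre-lalg} shows that $\mc{F}:=(\det_1)^{-1}(\det_1((x,\delta)))$ is equidimensional of dimension $d = \dim(G^{\der}) + [F:\Q_p](\dim(B) - \dim(Z(G)))$, with a smooth Zariski dense open subset $\mc{F}' := \mc{F} \cap U^{\sq}_{\tri}(\br)^{\mr{vreg}}$.

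From here the argument follows the template of Lemma \ref{l:triang}. Using \cite[Lemma 4.9]{fkp-duke} I would produce, for each $N$, a point $(x'_N,\delta'_N) \in \mc{F}'(E)$ whose underlying lift is congruent to $\rho$ modulo $\vpi^N$ (again possibly after a further extension of $E$). An $E$-adic neighbourhood $U'$ of $(x'_N,\delta'_N)$ in $\mc{F}'(E)$ then consists entirely of such lifts; applying Lemma \ref{l:daruvar}(3) and shrinking, $\pi_1$ embeds $U'$ onto a compact $E$-adic submanifold of $\spec(R^{\sq,\un}[1/\vpi])(E)$. Defining $R$ as the reduced quotient of $R^{\sq,\un}$ corresponding to the Zariski closure of $\pi_1(\wh{G}(\co)\cdot U')$, the $\wh{G}(\co)$-stability of the orbit yields the invariance of the kernel in (1), while flatness and reducedness are built into the construction. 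Zariski density of $\pi_1(\wh{G}(\co)\cdot U')$ in $\spec(R)$, together with reducedness, provides a smooth point $v$ of $\spec(R[1/\vpi])$ inside this orbit; $\wh{G}(\co)$-equivariance of $\pi_1$ lets one take $v = \pi_1(v')$ with $v' \in U'$, after which a final shrinkage gives (2) and (3) with $U:=\pi_1(U')$. Finally, (4) follows from continuity of the $\wh{G}(\co)$-action on $\mc{F}'(E)$ combined with the fact that $\{\wh{G}^{(m)}(\co)\}_{m \geq 1}$ forms a neighbourhood basis of the identity.

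The main obstacle I expect is the fibre-dimension analysis in the second paragraph: one must check that twisting by central characters yields enough automorphisms to make every fibre of $\det_1$ over $E'$-points isomorphic in the $G$-valued setting. This is exactly where the hypothesis $p \nmid n$ enters (on the $\gln$ side the isogeny $Z(G) \to G/G^{\der}$ degenerates to the familiar $x \mapsto x^n$ on $\G_m$), and where the proof departs most visibly from the $\gln$ argument of Lemma \ref{l:triang}. Everything else is a mechanical adaptation.
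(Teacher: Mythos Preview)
Your proposal is correct and follows exactly the approach the paper takes: the paper's proof of Lemma \ref{l:triang2} consists of a single sentence referring back to Lemma \ref{l:triang} with Lemma \ref{l:daruvar} substituted for Lemma \ref{l:bhs}, and you have faithfully carried out that substitution, including the fibre-dimension computation $\dim(G)+[F:\Q_p]\dim(B)-\dim(Z(G))(1+[F:\Q_p])=\dim(G^{\der})+[F:\Q_p](\dim(B)-\dim(Z(G)))$ and the twisting argument via the isogeny $Z(G)\to G/G^{\der}$ that the paper leaves implicit. Your observation that the ``very regular lift'' hypothesis of Lemma \ref{l:daruvar} is automatic from the existence of $(x,\delta)$ is also correct and worth making explicit.
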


\begin{proof}
  This is proved in essentially the same way as Lemma \ref{l:triang}
  using Lemma \ref{l:daruvar} instead of Lemma \ref{l:bhs}.
\end{proof}

\section{Trianguline lifting theorems} \label{s:3}

In this section we formulate a general lifting theorem for
representations of global fields.

\subsection{Selmer and relative Selmer groups} \label{s:selmer}

Let $F$ be a global field, and let $E, \co, \vpi$ be as before. Let
$G$ be a reductive group scheme over $\co$, $G^{\mr{der}}$ the derived
group of its identity component and $\fgder$ the Lie algebra of
$G^{\mr{der}}$. For some $n \geq 1$ let
$\rho_n: \Gamma_F \to G(\co/\vpi^n)$ be a continuous homomorphism.
Let $\mc{S}$ be a finite set of primes containing all primes at which
$\rho_n$ is ramified and also all primes dividing $p$. We first recall
some definitions and results from \cite{fkp-duke}.

In what follows, when we have an integer $0 < r < n$, we will write
$\rho_{r}$ for the reduction $\rho_n\pmod{\vpi^{r}}$. For each prime
$v \in {\mc{S}}$ we assume that for $0 < r \leq n$ we have subgroups
$Z_{r,v} \subset Z^1(\Gamma_{F_v}, \rho_r(\fgder))$ such that
\begin{itemize}
\item Each $Z_{r, v}$ contains the group of boundaries $B^1(\Gamma_{F_v}, \rho_r(\fgder))$.
\item As $r$ varies, the inclusion and reduction maps induce short exact sequences
\[
0 \to Z_{a, v} \to Z_{a+b, v} \to Z_{b, v} \to 0
\]
as in Proposition \ref{prop2}.
\end{itemize}
We let $L_{r,v} \subset H^1(\Gamma_{F_v}, \rho_r(\fgder))$ be the
image of $Z_{r,v}$, and we let
$L_{r,v}^{\perp} \subset H^1(\Gamma_{F_v}, \rho_r(\fgder)^*)$ be the
annihilator of $L_{r,v}$ under the local duality pairing.  Let
$\mc{L}_r =\{L_{r,v}\}_{v \in {\mc{S}}}$, $0 < r \leq n$, and
similarly define $\mc{L}_r^{\perp}$. The Selmer group
$H^1_{\mc{L}_r}(\Gamma_{F,{\mc{S}}},\rho_r(\fgder))$ is defined to be
\[
\ker \left ( H^1(\Gamma_{F,{\mc{S}}}, \rho_r(\fgder)) \to \bigoplus_{v \in {\mc{S}}}
\frac{H^1(\Gamma_{F_v}, \rho_r(\fgder))}{L_{r,v}} \right )
\]
and the dual Selmer group
$H^1_{\mc{L}_r^{\perp}}(\Gamma_{F,{\mc{S}}}, \rho_r(\fgder)^*)$ is
defined analogously.

\smallskip

The definition below is a variant of the definition of \emph{balanced}
in \cite[Definition 6.2]{fkp-duke} (which is the condition when $a=0$).
\begin{defn} \label{d:sb}
  We say that the local conditions $\mc{L}_r$, for $0 < r \leq n$, are
  \emph{semi-balanced} if there exists a non-negative integer $a$ such
  that
  \[
    |H^1_{\mc{L}_r}(\Gamma_{F,{\mc{S}}}, \rho_r(\fgder))| = |\co/\varpi^r|^a\cdot|
    H^1_{\mc{L}_r^{\perp}}(\Gamma_{F,{\mc{S}}}, \rho_r(\fgder)^*)|
  \]
  for all $0 < r \leq n$.
\end{defn}

The basic objects that we need to control in order to prove lifting
results using the methods of \cite{fkp-duke} and
\cite{fkp-invent} are the relative (dual) Selmer groups of
\cite[Definition 6.2]{fkp-duke}, so we recall them here:
\begin{defn}\label{def:relativeSelmer}
For $0 < r \leq n$, we define the $r$-th relative Selmer
group to be
\[
\ov{H^1_{\mc{L}_r}(\Gamma_{F,{\mc{S}}}, \rho_r(\fgder))} := \im \left ( H^1_{\mc{L}_r}(\Gamma_{F,{\mc{S}}}, \rho_r(\fgder)) \to H^1_{\mc{L}_1}(\Gamma_{F,{\mc{S}}}, \br(\fgder)) \right )
\]
and the $r$-th relative dual Selmer group to be
\[
\ov{H^1_{\mc{L}_r^{\perp}}(\Gamma_{F,{\mc{S}}}, \rho_r(\fgder)^*)} := \im \left ( H^1_{\mc{L}_r^{\perp}}(\Gamma_{F,{\mc{S}}}, \rho_r(\fgder)^*) \to H^1_{\mc{L}_1^{\perp}}(\Gamma_{F,{\mc{S}}}, \br(\fgder)^*) \right ).
\]
\end{defn}

\subsection{Adequate cocyles and semi-balancedness} \label{s:adequate}

We continue with the notation from \S\ref{s:selmer}, but now assume
that $F$ is a number field. We set $\br = \rho_1$ and for each
infinite place $v$ of $F$ we let
$c(\br, v) = \dim(\fgder)^{\Gamma_{F_v}}$.

The definition below is a modification of \cite[Definition
1.4]{fkp-invent}; it is useful for lifting representations
which are more general than the odd representations considered
there.
\begin{defn} \label{d:strong} We say that the $\co$-submodules
  $L_{r,v} \subset H^1(\Gamma_{F_v},\rho_r(\fgder))$ are
  \emph{adequate} if for each finite place $v$ there exists an integer
  $a_v$ such that $a_v = 0$ for all but finitely many $v$,
  \begin{equation}
    |L_{r,v}| = |\rho_r(\fgder)^{\Gamma_{F_v}}|\cdot
    |\co/\vpi^r|^{a_v} ,
  \end{equation}
  and  $\sum_{v \ \mr{finite}} a_v \geq \sum_{v \mid \infty} c(\br, v)$.
  

\end{defn}

The following lemma is a variant of \cite[Lemma 6.3]{fkp-duke}.

\begin{lem} \label{l:semibal} If the collection of
  $\co/\vpi^r$-modules $\mc{L}_r$ is adequate and the spaces of
  invariants $\br(\fgder)^{\Gamma_F}$ and $(\br(\fgder)^*)^{\Gamma_F}$
  are both zero, then the relative Selmer and dual Selmer groups are
  also semi-balanced in the sense that
\[
\dim(\ov{H^1_{\mc{L}_n}(\Gamma_{F,{\mc{S}}}, \rho_n(\fgder))}) \geq \dim(\ov{H^1_{\mc{L}_n^{\perp}}(\Gamma_{F,{\mc{S}}}, \rho_n(\fgder)^*)}).
\]
\end{lem}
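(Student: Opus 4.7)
The plan is to first establish an exact identity of orders of the absolute Selmer and dual Selmer groups via the Greenberg--Wiles formula, and then convert this into the relative Selmer inequality by a d\'evissage in the filtration by $\vpi^r$. Because $\br(\fgder)^{\Gamma_F} = 0$, the long exact cohomology sequence attached to $0 \to \br(\fgder) \xrightarrow{\vpi^{r-1}} \rho_r(\fgder) \to \rho_{r-1}(\fgder) \to 0$ gives by induction on $r$ that $\rho_r(\fgder)^{\Gamma_F} = 0$ for all $r \leq n$, and symmetrically $(\rho_r(\fgder)^*)^{\Gamma_F} = 0$.

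I would then apply the Greenberg--Wiles formula to each coefficient module $\rho_r(\fgder)$ to obtain
\[
\frac{|H^1_{\mc{L}_r}(\Gamma_{F,\mc{S}}, \rho_r(\fgder))|}{|H^1_{\mc{L}_r^{\perp}}(\Gamma_{F,\mc{S}}, \rho_r(\fgder)^*)|} = \frac{|\rho_r(\fgder)^{\Gamma_F}|}{|(\rho_r(\fgder)^*)^{\Gamma_F}|} \cdot \prod_{v \in \mc{S}} \frac{|L_{r,v}|}{|\rho_r(\fgder)^{\Gamma_{F_v}}|} \cdot \prod_{v \mid \infty} \frac{1}{|\rho_r(\fgder)^{\Gamma_{F_v}}|}.
\]
The global factor equals $1$ by the vanishing just observed. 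Since $p > 2$, the archimedean cohomology vanishes in positive degrees, which is why the infinite-place contribution is just $|\rho_r(\fgder)^{\Gamma_{F_v}}|^{-1}$; moreover $\Ad(\rho_r(c_v))$ is an involution on the free module $\fgder \otimes \co/\vpi^r$ and hence, since $2 \in \co^\times$, diagonalises with $+1$-eigenspace free of rank $c(\br, v)$ over $\co/\vpi^r$. Combined with the adequacy identity $|L_{r,v}| = |\rho_r(\fgder)^{\Gamma_{F_v}}| \cdot |\co/\vpi^r|^{a_v}$, the formula collapses to
\[
\frac{|H^1_{\mc{L}_r}|}{|H^1_{\mc{L}_r^{\perp}}|} = |\co/\vpi^r|^{A}, \qquad A := \sum_{v \in \mc{S}} a_v - \sum_{v \mid \infty} c(\br, v),
\]
with $A \geq 0$ by the adequacy hypothesis.

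The final step is to pass from this absolute identity to the relative inequality. Using $\rho_{r-1}(\fgder)^{\Gamma_F} = 0$ to obtain the injectivity $H^1(\rho_{r-1}(\fgder)) \hookrightarrow H^1(\rho_r(\fgder))$, together with the short exact sequences $0 \to Z_{r-1, v} \to Z_{r, v} \to Z_{1, v} \to 0$ to control the local conditions under reduction and multiplication by $\vpi$, one extracts (just as in the proof of \cite[Lemma 6.3]{fkp-duke}) a short exact sequence
\[
0 \to H^1_{\mc{L}_{r-1}}(\rho_{r-1}(\fgder)) \to H^1_{\mc{L}_r}(\rho_r(\fgder)) \to \ov{H^1_{\mc{L}_r}(\rho_r(\fgder))} \to 0,
\]
and the analogous sequence for the dual Selmer group. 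Dividing the Greenberg--Wiles identity at level $r$ by the identity at level $r-1$ and substituting these d\'evissage sequences yields
\[
\frac{|\ov{H^1_{\mc{L}_r}}|}{|\ov{H^1_{\mc{L}_r^{\perp}}}|} = |k|^{A},
\]
with the base case $r = 1$ reducing to the Greenberg--Wiles identity itself since $\ov{H^1_{\mc{L}_1}} = H^1_{\mc{L}_1}$. Taking $\log_{|k|}$ gives the claimed inequality, with gap exactly $A \geq 0$.

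The main technical obstacle is the exactness of the d\'evissage sequence of Selmer groups. Injectivity of the first map and the characterisation of its image in $H^1_{\mc{L}_1}(\br(\fgder))$ follow readily from the vanishing of invariants and the compatibility $Z_{r-1, v} \hookrightarrow Z_{r, v} \twoheadrightarrow Z_{1, v}$; however, identifying the kernel of the reduction $H^1_{\mc{L}_r} \to H^1_{\mc{L}_1}$ with the $\vpi$-image of $H^1_{\mc{L}_{r-1}}$ requires the non-trivial local observation that $\vpi c' \in L_{r, v}$ forces $c' \in L_{r-1, v}$, which is the technical core of the argument and is handled exactly as in \cite[Lemma 6.3]{fkp-duke}.
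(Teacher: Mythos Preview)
Your proposal is correct and follows essentially the same route as the paper: apply Greenberg--Wiles at each level $r$ (using the vanishing of invariants and the adequacy identity) to get the semi-balanced formula with $a = \sum_v a_v - \sum_{v\mid\infty} c(\br,v) \geq 0$, and then pass to the relative groups via the d\'evissage sequence, citing \cite[Lemma 6.1, Lemma 6.3]{fkp-duke} for the key step. The paper's proof is terser---it simply invokes Greenberg--Wiles and the two lemmas from \cite{fkp-duke}---but your expanded sketch of the d\'evissage (including the identification of the technical local point $\vpi c' \in L_{r,v} \Rightarrow c' \in L_{r-1,v}$) is exactly what those references contain.
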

\begin{proof}
  The hypotheses together with the Greenberg--Wiles formula
  \cite[Theorem 2.18]{DDT1} imply that the Selmer and dual Selmer
  groups are semi-balanced: the integer $a$ of Definition \ref{d:sb}
  is $\sum_{v \ \mr{finite}}a_v - \sum_{v \mid \infty} c(\br, v)$. From this
  the proof of semi-balancedness of the relative Selmer and dual
  Selmer group follows from \cite[Lemma 6.1]{fkp-duke} as in the
  proof of \cite[Lemma 6.3]{fkp-duke}.
\end{proof}

\subsection{The general lifting theorem} \label{s:general}

As above, let $F$ be a global field and $\br: \Gamma_F \to G(k)$ be a
continuous representation, where $k$ is a finite field of
characteristic $p$ and $G$ is a split reductive group over the ring of
integers $\mc{O}$ of a finite extension $E/\Q_p$; the group $G$ need
not be connected, and if this is the case we assume that the component
group of $G$ has order prime to $p$ and $G$ is a semi-direct product
of its identity component $G^0$ and the component group. Let $\wt{F}$
be the smallest extension of $F$ such that
$\br(\Gamma_{\wt{F}}) \subset G^0(k)$ and let $K = F(\br, \mu_p)$.

We let $\bar{\mu}: \Gamma_F \to (G/G^{\mr{der}})(k)$ be the map induced by
$\br$ and the quotient map $G \to G/G^{\mr{der}}$, and we fix a lift
$\mu: \Gamma_F \to G(\mc{O})$ of $\bar{\mu}$.

\begin{assA} \label{a:1}
  $ $
  \begin{enumerate}
\item $\br$ and $\mu$ are unramified outside a finite set of places
  $\mc{S}$ of $F$ containing all places of $F$ over $p$ if $F$ is a number
  field.
\item If $F$ is a function field we assume that $p \neq \ch(F)$.
\item $H^1(\mr{Gal}(K/F), \br(\fgder)^*) = 0$.
\item $\br(\fgder)$ and $\br(\fgder)^*$ do not contain the trivial
  representation as a submodule.
\item There is no surjection of $\F_p[\Gamma_F]$-modules from
  $\br(\fgder)$ onto any $\F_p[\Gamma_F]$-module subquotient of
  $\br(\fgder)^*$.
\end{enumerate}
\end{assA}

We also need some local assumptions on $\br$ which we formulate
separately.

\begin{assB} \label{a:2} %
  There exists a finite extension $E'$ of $E$ with ring of integers
  $O'$ such that for each finite place $v \in \mc{S}$ there exists a
  lift $\rho_v$ of $\br|_{\Gamma_{F_v}}$ to a continuous homomorphism
  $\Gamma_{F_v} \to G(\mc{O'})$ with fixed determinant
  $\mu|_{\Gamma_{F_v}}$. Furthermore, $\rho_v$ is an element of an
  $E$-adic manifold $U_v$ contained in
  $\spec(R_{\br|_{\Gamma_{F_v}}}^{\sq, \mr{univ}})(\mc{O'})$, with
  fixed determinant as above, and which is invariant under conjugation
  by $\wh{G^{\mr{der}}}(O')$ and of dimension $d_v$, with
  $d_v = \dim_k (\fgder)$ for each finite $v \nmid p$. Finally,
  \begin{equation} \label{e:ad}
    \sum_{v \ \mr{finite}} (d_v - \dim(\fgder)) \geq \sum_{v \mid
      \infty} c(\br, v) .
  \end{equation}
\end{assB}

\begin{rem} \label{r:lifts} $ $
  \begin{enumerate}
  \item We usually choose $U_v$ to lie in the $\mc{O}'$-points of an
    irreducible component of
    $\spec(R)$, where $R$ is a $\wh{G}(\mc{O}')$-invariant (or $\wh{G^{\mr{der}}}(\mc{O}')$-invariant) quotient of the
    universal local lifting ring of $\br|_{\Gamma_{F_v}}$.
  \item The question of the existence of $U_v$ is most delicate for
    primes $v \mid p$, but for arbitrary groups $G$ it is still
    unknown whether a lift $\rho_v$ always exists even for other
    (ramified) primes. In most applications, if $v \nmid p$ then we
    choose $U_v$ so that $d_v = \dim_k(\fgder)$ (see \cite[Proposition
    4.7]{fkp-duke}; this is the maximal possible), in which case
    $a_v = 0$.
  \end{enumerate}
\end{rem}

The following theorem is now an easy consequence of the results of
\cite{fkp-duke} and \cite{fkp-invent}.

\begin{thm} \label{t:general} Let $\br:\Gamma_F \to G(k)$ satisfy
  Assumptions A and B and assume that $p \gg_G
  0$. Then there exists a finite set of places
  $\mc{S}' \supset \mc{S}$ such that $\br$ lifts to a continuous
  representation $\rho: \Gamma_{F, \mc{S}'} \to G(\mc{O}')$
  satisfying:
  \begin{enumerate}
  \item The image $\rho( \Gamma_{F, \mc{S}'})$ intersects
    $G^{\mr{der}}(\mc{O}')$ in an open subgroup.
  \item $\rho|_{\Gamma_{F_v}}$ is an element of $U_v$ for all
    $v \in \mc{S}$ and $\rho$ can be chosen so that $\rho|_{\Gamma_{F_v}}$
    is, modulo $\wh{G^{\mr{der}}}(\mc{O}')$-conjugation, congruent to $\rho_v$ modulo any pre-specified power of $\varpi$.
  \end{enumerate}
\end{thm}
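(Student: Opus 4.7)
\emph{The plan} is to show that Assumptions A and B supply exactly the ingredients required by the lifting machinery of \cite{fkp-duke} and \cite{fkp-invent}, when packaged in the axiomatic framework developed in \S\S \ref{s:selmer}--\ref{s:adequate}, and that the local input to this machinery is produced by Proposition \ref{prop2}.

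At each $v \in \mc{S}$ I apply Proposition \ref{prop2}---or, at $v \nmid p$ where $d_v = \dim \fgder$ is maximal, the simpler Proposition \ref{prop1}---to the data $(U_v, \rho_v)$ provided by Assumption B. The $\wh{G^{\der}}(\co')$-invariance of $U_v$ is stronger than the orbit hypothesis of Proposition \ref{prop2}, so one obtains compatible submodules $Z_{r,v} \subset Z^1(\Gamma_{F_v}, \rho_r(\fgder))$ whose images $L_{r,v}$ satisfy $|L_{r,v}| = |\rho_r(\fgder)^{\Gamma_{F_v}}| \cdot |\co/\vpi^r|^{d_v - \dim \fgder}$. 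Setting $a_v := d_v - \dim \fgder$---which vanishes at $v \nmid p$---translates the inequality \eqref{e:ad} from Assumption B into adequacy of $\mc{L}_r = \{L_{r,v}\}$ in the sense of Definition \ref{d:strong}. Since Assumption A(4) forces $\br(\fgder)^{\Gamma_F} = (\br(\fgder)^*)^{\Gamma_F} = 0$, Lemma \ref{l:semibal} then delivers semi-balancedness of the relative Selmer and dual Selmer groups.

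Next I feed this into the inductive lifting construction of \cite{fkp-duke} and \cite{fkp-invent}. At each stage one lifts $\rho_n : \Gamma_{F,\mc{S}_n} \to G(\co'/\vpi^n)$, with $\rho_n|_{\Gamma_{F_v}}$ congruent to a point of $U_v$ modulo $\vpi^n$, to $\rho_{n+1}$ by adjoining to $\mc{S}_n$ carefully chosen auxiliary primes $w$ that kill relative dual Selmer obstructions. The remaining clauses of Assumption A---vanishing of $H^1(\gal(K/F), \br(\fgder)^*)$ and the absence of surjections from $\br(\fgder)$ onto any $\F_p[\Gamma_F]$-subquotient of $\br(\fgder)^*$---are exactly the Chebotarev-type inputs required to produce such primes $w$: the first controls inflation--restriction over $K$, and the second forces Frobenius elements at which a prescribed dual Selmer class is nontrivially detected while the associated local Selmer condition remains under control. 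Semi-balancedness guarantees that the induction terminates after finitely many such enlargements; the resulting compatible system has as its limit the desired $\rho$. Property (2) is automatic because each local lift is manufactured inside $U_v$ to the prescribed precision, and property (1) follows from the standard large-image arguments of \cite{fkp-duke} once $p \gg_G 0$.

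The main technical point---and the reason this is not a direct citation---is that the inductive estimates must run with \emph{adequate} local conditions rather than the balanced conditions used previously. The surplus $\sum_v a_v - \sum_{v \mid \infty} c(\br,v) \geq 0$ furnished by \eqref{e:ad} plays the role of the equality that the balanced case enjoys automatically, and one checks that every estimate in \cite{fkp-duke, fkp-invent} that previously used balancedness in fact only needs the inequality in the Selmer-favouring direction. Carrying out this bookkeeping upgrade is the substance of \S\S \ref{s:selmer}--\ref{s:adequate}, and it is where any genuine obstacle lies; everything else is a matter of tracking the axioms.
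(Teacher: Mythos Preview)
Your proposal is correct and follows essentially the same approach as the paper: apply Proposition \ref{prop2} at each $v\in\mc{S}$ to extract the cocycle modules $Z_{r,v}$, read off adequacy of the resulting $L_{r,v}$ from the size formula in part (5) together with \eqref{e:ad}, invoke Lemma \ref{l:semibal} (using Assumption A(4)) to obtain semi-balancedness, and then feed everything into the lifting machinery of \cite{fkp-duke} and \cite{fkp-invent}. The paper's proof is organized slightly differently---it first observes that Sections 3 and 4 of \cite{fkp-invent} produce the mod-$\vpi^n$ lifts $\rho_n$ without any appeal to the dimensions $d_v$ or the archimedean data, and only then brings in Proposition \ref{prop2} and adequacy for the relative deformation theory of Section 6---but this is a matter of exposition rather than substance.
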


\begin{proof}
  Assumptions A and the existence of $\rho_v$ in Assumption
  B imply that all the arguments of Sections 3 and 4 of
  \cite{fkp-invent} go through without any changes to construct lifts
  $\rho_n$ as in Theorem 4.4 of \emph{loc.~cit.}. The point is that
  neither the archimedean primes, nor the dimensions $d_v$, play any
  role in the proof of this theorem. To complete the proof we must
  explain how to carry out the ``relative deformation theory''
  arguments of \cite[Section 6]{fkp-invent}.

  To do this, we apply Proposition \ref{prop2} to the sets $U_v$ for
  each $v \in \mc{S}$ to produce cocycles $Z_{r,v}$ for all $r \leq n$. The
  condition on $d_v$ in Assumption B and (5) of Proposition
  \ref{prop2} implies that the $L_{r,v}$ constructed from the
  $Z_{r,v}$ are adequate in the sense of Definition
  \ref{d:strong}. Then by Lemma \ref{l:semibal}, the relative Selmer
  and dual Selmer groups are semi-balanced. The first part of
  Assumption 5.1 of \cite{fkp-invent} is part of (4) of Assumption
  A above, and this together with the existence of local lifts
  giving rise to semi-balanced relative Selmer and dual Selmer groups
  is exactly what is needed for the proof of \cite[Theorem
  5.1]{fkp-invent} to go through to produce a lift $\rho$ satisfying
  all the claimed properties.
\end{proof}

\subsection{Trianguline lifts of $\glo$ representations of number
  fields} \label{s:lifts}

Before proving our main theorem we need the following:
\begin{lem} \label{l:invodd}%
  If $\br: \Gamma_F \to \gln(k)$ is a $\glo$ representation,
  then for every real place $v$ of $F$,
  \begin{equation}
    c(\br,v)  = \begin{cases}
    2m^2 -1 & \text{if $n = 2m$ is even,} \\
    2m^2 + 2m & \text{if $n = 2m + 1$ is odd.}
  \end{cases}
  \end{equation}
\end{lem}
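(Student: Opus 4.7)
The plan is a direct computation, using the block decomposition of $\sln = \fgder$ under the adjoint action of $\br(c_v)$, where $c_v \in \Gamma_{F_v}$ is the complex conjugation at the real place $v$.

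First I would observe that for $G = \gln$, the derived Lie algebra $\fgder$ is $\sln$, the space of trace-zero $n \times n$ matrices, of dimension $n^2 - 1$. For a real place $v$, $\Gamma_{F_v}$ is generated by $c_v$ and so $(\sln)^{\Gamma_{F_v}}$ is precisely the $+1$-eigenspace of $\Ad(\br(c_v))$ acting on $\sln$.

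Next I would diagonalize. Since $\br(c_v)$ is an involution in $\gln(k)$ (and $p \neq 2$), it is conjugate to $\mr{diag}(I_{n^+}, -I_{n^-})$ with $n^+ + n^- = n$; conjugating, I may assume $\br(c_v)$ has this form. Writing a matrix $X \in \mr{M}_n(k)$ in block form
\[
X = \begin{pmatrix} A & B \\ C & D \end{pmatrix}
\]
with $A$ of size $n^+ \times n^+$ and $D$ of size $n^- \times n^-$, conjugation by $\mr{diag}(I_{n^+}, -I_{n^-})$ fixes $A$ and $D$ and negates $B$ and $C$. Hence the $+1$-eigenspace of $\Ad(\br(c_v))$ on $\mr{M}_n(k)$ has dimension $(n^+)^2 + (n^-)^2$, and its intersection with $\sln$ (the trace-zero condition cuts down by exactly one, as the invariants contain the identity) has dimension $(n^+)^2 + (n^-)^2 - 1$.

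Finally I would invoke the $\glo$ hypothesis, which forces $|n^+ - n^-| \leq 1$. If $n = 2m$, parity forces $n^+ = n^- = m$, giving $2m^2 - 1$; if $n = 2m+1$, then $\{n^+, n^-\} = \{m, m+1\}$, giving $m^2 + (m+1)^2 - 1 = 2m^2 + 2m$. This matches both cases of the claimed formula. There is no real obstacle; the only small point to verify is that removing the trace from the invariants genuinely reduces the dimension by one, which holds because the identity matrix lies in the invariants of $\mr{M}_n(k)$ and has nonzero trace (as $p$ can be assumed not to divide $n$ in the applications, or more simply, the trace functional is surjective on the invariant subspace since it is nonzero on the identity block already).
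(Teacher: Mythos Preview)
Your proof is correct and is exactly the elementary computation the paper has in mind (the paper's proof reads in full: ``This is an elementary computation''). One minor remark: your final caveat about $p \nmid n$ is unnecessary, since the trace functional is already surjective on the invariant block-diagonal matrices (e.g.\ it sends the elementary matrix $E_{11}$ to $1$), so the intersection with $\sln$ has dimension $(n^+)^2 + (n^-)^2 - 1$ regardless of whether $p$ divides $n$.
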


\begin{proof}
  This is an elementary computation.
\end{proof}

We now restate and prove our main theorem.
\begin{thm} \label{t:triang} %
  Let $F$ be a number field and $\br:\Gamma_{F,\mc{S}} \to \gln(k)$ a
  $\glo$ representation. If $p \gg_n 0$ and $\br$ satisfies all the
  conditions in Assumption A then there exists a finite set of places
  $\mc{S}' \supset \mc{S}$ and a finite extension $E'$ of $E$ with
  ring of integers $\co'$ such that $\br$ lifts to a $\glo$
  representation $\rho: \Gamma_{F,\mc{S}'} \to \gln(\co')$ which is
  regular trianguline at all primes of $F$ above $p$. Furthermore, one
  can also ensure that $\rho(\Gamma_{F, \mc{S}'})$ contains an open
  subgroup of $\mr{SL}_n(\co')$.
\end{thm}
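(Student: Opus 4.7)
The strategy is to apply Theorem \ref{t:general} to $G = \gln$, with the local input at primes above $p$ supplied by Lemma \ref{l:triang}. Since Assumption A is given in the hypotheses, the entire burden of the proof is to produce local data $(\rho_v, U_v, d_v)$ satisfying Assumption B, after which all three conclusions of the theorem are extracted directly from the output of Theorem \ref{t:general}.

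First, I would fix a lift $\mu:\Gamma_{F,\mc{S}}\to \gln(\co)$ of $\det(\br)$ (e.g. Teichm\"uller), so that the ``fixed determinant'' framework of \S\ref{s:general} is in effect. Assuming $p > n$ (absorbed into $p\gg_n 0$), for each $v\in \mc{S}$ with $v\mid p$, Lemma \ref{l:triang} produces, after a finite extension $E'/E$, a regular trianguline lift $\rho_v$ of $\br|_{\Gamma_{F_v}}$ with $\det(\rho_v) = \mu|_{\Gamma_{F_v}}$, together with a compact $E$-adic manifold $U_v \subset \spec(R_{\br|_{\Gamma_{F_v}}}^{\sq,\un})(\co')$ of dimension $d_v = n^2 - 1 + [F_v:\Q_p]\bigl(\tfrac{n(n+1)}{2} - 1\bigr)$ consisting of regular trianguline fixed-determinant lifts, invariant under $\wh{\mr{SL}_n}(\co')$, and such that some point of $U_v$ has an $(\wh{\mr{SL}_n})^{(m)}(\co')$-orbit contained in $U_v$. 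For finite $v\in \mc{S}$ with $v\nmid p$, I would use \cite[Proposition 4.7]{fkp-duke} (or equivalently Proposition \ref{prop1}) to exhibit a fixed-determinant lift $\rho_v$ lying on a smooth point of an irreducible component of the corresponding local lifting ring, producing $U_v$ with $d_v = \dim_k(\sln) = n^2 - 1$, contributing $a_v = 0$.

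The only remaining item in Assumption B is the inequality \eqref{e:ad}. Writing $F$ with $r_1$ real and $r_2$ complex places, the left-hand side equals
\[
\sum_{v\mid p}[F_v:\Q_p]\Bigl(\tfrac{n(n+1)}{2} - 1\Bigr) = [F:\Q]\Bigl(\tfrac{n(n+1)}{2} - 1\Bigr) = (r_1 + 2r_2)\Bigl(\tfrac{n(n+1)}{2} - 1\Bigr).
\]
For the right-hand side, Lemma \ref{l:invodd} computes $c(\br,v)$ at real places, while at complex places $\Gamma_{F_v}$ is trivial so $c(\br,v) = \dim(\sln) = n^2 - 1$. A direct comparison, separating the cases $n = 2m$ and $n = 2m+1$, shows that $\text{LHS} - \text{RHS} = r_1\cdot m + r_2(2m-1)$ or $r_1\cdot m + r_2\cdot 2m$ respectively, both $\geq 0$. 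So Assumption B holds.

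With A and B verified, Theorem \ref{t:general} yields a set $\mc{S}' \supset \mc{S}$ and a lift $\rho:\Gamma_{F,\mc{S}'}\to \gln(\co')$ whose restriction to $\Gamma_{F_v}$ lies in $U_v$ for each $v\in \mc{S}$ (up to $\wh{\mr{SL}_n}(\co')$-conjugation and up to any prescribed $\vpi$-power congruence). Because $U_v$ consists of regular trianguline lifts for $v\mid p$, $\rho$ is regular trianguline at all primes above $p$. The image assertion is conclusion (1) of Theorem \ref{t:general} applied to $G^{\der} = \mr{SL}_n$. Finally, to see that $\rho$ is itself $\glo$: for each real place $v$, $\rho(c_v)$ is an involution in $\gln(\co')$, and since $p > 2$ the $\co'$-module $(\co')^n$ decomposes as the direct sum of its $\pm 1$ eigenspaces for $\rho(c_v)$, whose ranks reduce modulo $\vpi$ to $n^{\pm}(\br(c_v))$. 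Hence $|n^+(\rho(c_v)) - n^-(\rho(c_v))| = |n^+(\br(c_v)) - n^-(\br(c_v))| \leq 1$, as needed. The main point of friction is really just the bookkeeping in the inequality \eqref{e:ad}, since every structural ingredient has been set up precisely so that the trianguline local dimensions from Lemma \ref{l:bhs} exceed the archimedean deficit by the margin computed above.
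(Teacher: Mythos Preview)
Your proof is correct and follows essentially the same route as the paper's: verify Assumption B by invoking Lemma \ref{l:triang} at $v\mid p$ and standard local results at $v\nmid p$, then check the numerical inequality \eqref{e:ad} via Lemma \ref{l:invodd} and apply Theorem \ref{t:general}. Two small imprecisions: for finite $v\nmid p$ the paper cites \cite[\S 2.4.4]{clozel-harris-taylor} to \emph{produce} a smooth fixed-determinant lift (your reference \cite[Proposition 4.7]{fkp-duke} presupposes one), and Lemma \ref{l:triang} does not give $U_v$ invariant under all of $\wh{\mr{SL}_n}(\co')$ but only that some point has its $(\wh{\gln})^{(m)}(\co')$-orbit inside $U_v$---this weaker condition is, however, exactly what Proposition \ref{prop2} (and hence the proof of Theorem \ref{t:general}) actually requires.
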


\begin{proof}
  The proof is an application of Theorem \ref{t:general}.

  We begin by choosing any lift $\mu$ of $\bar{\mu}$. The conditions
  of Assumption A hold tautologically, so we only need to show
  that the conditions of Assumption B also hold.

  Since the group $G$ here is $\gln$, for $v \nmid p$ this holds by
  \cite[\S 2.4.4]{clozel-harris-taylor} with $d_v = \dim_k(\fgder)$.
  The main change that we need to make is in the local arguments for
  primes $v \mid p$. For each such prime, we apply Lemma
  \ref{l:triang} (with the $F$ there being $F_v$) to obtain a set
  $U_v$ of trianguline lifts of $\br|_{\Gamma_{F_v}}$. For these $U_v$
  we have by (2) of Lemma \ref{l:triang}
  $d_v = n^2 -1 + [F_v:\Q_p]\bigl (\tfrac{n(n+1)}{2} - 1 \bigr )$.

  Since $\sum_{v \mid p} [F_v:\Q_p] = [F:\Q]$,  we get
  $
    \sum_{v \mid p} d_v = \sum_{v \mid p} (n^2 - 1) +[F:\Q] \bigl
    (\tfrac{n(n+1)}{2} - 1 \bigr ).
  $
  On the other hand
  $ \sum_{v | \infty} c(\br, v) = \sum_{v \ \mr{real}} c(\br, v) +
  \sum_{v \ \mr{complex}} c(\br, v) $. Using Lemma \ref{l:invodd} we
  see that if $v$ is a real place then
  $c(\br,v) \leq \bigl (\tfrac{n(n+1)}{2} - 1 \bigr ) $ and if $v$ is
  a complex place then
  $c(\br,v) = n^2 - 1 \leq 2 \bigl (\tfrac{n(n+1)}{2} - 1 \bigr )$.
  Using this it follows that Equation \eqref{e:ad} holds, so the proof
  is complete.
\end{proof}

\begin{rem} %
  It seems reasonable to expect that Assumption A and the assumption
  that $p \gg_n 0$ are superfluous and trianguline lifts as in Theorem
  \ref{t:triang} exist as long as $\br$ is $\glo$.
\end{rem}

\begin{rem} \label{r:upper}%
  If $\br|_{\Gamma_{F_v}}$ is upper triangular for a prime $v$ above
  $p$, using Remark \ref{r:ord} one may arrange that the same holds
  for $\rho|_{\Gamma_{F_v}}$.
\end{rem}

\begin{rem}
  The proof of Theorem \ref{t:triang} actually gives a family of
  (fixed-determinant) lifts of $\br$ (unramified outside a fixed
  finite set $\mc{S}'$ and trianguline at all primes above $p$)
  parametrised by a
  $\bigl( [F:\Q] \bigl (\tfrac{n(n+1)}{2} - 1 \bigr ) - \sum_{v |
    \infty} c(\br,v) \bigr )$-dimensional $E$-adic manifold.
\end{rem}

\begin{cor} \label{c:triang}%
  Let $F$ be an arbitrary number field and
  $\br:\Gamma_{F,\mc{S}} \to \gln(k)$ a $\glo$ representation. If
  $p \gg_n 0$ and $\br|_{\Gamma_{F(\zeta_p)}}$ is absolutely
  irreducible, then there exists a finite set of places
  $\mc{S}' \supset \mc{S}$ and a finite extension $E'$ of $E$ with
  ring of integers $\co'$ such that $\br$ lifts to a $\glo$
  representation $\rho: \Gamma_{F,\mc{S}'} \to \gln(\co')$ which is
  regular trianguline at all primes of $F$ above $p$. Furthermore, one
  can also ensure that $\rho(\Gamma_F)$ contains an open subgroup of
  $\mr{SL}_n(\co')$.
\end{cor}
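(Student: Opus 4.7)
The plan is to deduce Corollary \ref{c:triang} directly from Theorem \ref{t:triang}: since both statements share the hypothesis that $\br$ is $\glo$, the task reduces to verifying that the absolute irreducibility of $\br|_{\Gamma_{F(\zeta_p)}}$ together with $p \gg_n 0$ implies all the conditions of Assumption A for $G = \gln$.

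Conditions (1) and (2) of Assumption A hold trivially after enlarging $\mc{S}$ to contain the primes above $p$ and choosing any continuous lift $\mu$ of $\bar{\mu} = \det(\br)$ that is unramified outside $\mc{S}$. For (4), since $p > n$, the trace form realises $\br(\fgder) = \mg{sl}_n(k)$ as an $\F_p[\Gamma_F]$-module summand of $\mg{gl}_n(k)$ whose $\Gamma_{F(\zeta_p)}$-invariants are, by Schur's lemma applied to the absolutely irreducible $\br|_{\Gamma_{F(\zeta_p)}}$, precisely the trace-zero scalars, hence zero when $p \nmid n$; thus neither $\br(\fgder)$ nor its Tate dual contains the trivial representation.

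Conditions (3) and (5) are standard ``large image plus large $p$'' requirements. For $p \gg_n 0$, absolute irreducibility of $\br|_{\Gamma_{F(\zeta_p)}}$ forces its image modulo scalars to contain a quasi-simple subgroup of $\mr{PGL}_n(k)$ close to $\mr{PSL}_n(\F_p)$ (via the classification of maximal subgroups of $\gln$, or equivalently via a Thorne-style adequacy criterion). This yields the vanishing of $H^1(\mr{Gal}(K/F), \br(\fgder)^*)$ by known cohomology vanishing theorems for finite groups of Lie type acting on their Lie algebras, and it rules out condition (5) by a composition-factor comparison: $\br(\fgder)^*$ differs from $\br(\fgder)$ by a cyclotomic twist which is nontrivial modulo $p$, so any hypothetical $\F_p[\Gamma_F]$-surjection is excluded by Schur on the irreducible $\mg{sl}_n$-module.

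With Assumption A verified, Theorem \ref{t:triang} produces the desired $\glo$ lift $\rho: \Gamma_{F,\mc{S}'} \to \gln(\co')$ which is regular trianguline at all primes above $p$. The final claim that $\rho(\Gamma_F)$ contains an open subgroup of $\mr{SL}_n(\co')$ is inherited from Theorem \ref{t:general}(1): the constructed lift meets $\gln^{\mr{der}}(\co') = \mr{SL}_n(\co')$ in an open subgroup, and $\rho(\Gamma_F) = \rho(\Gamma_{F,\mc{S}'})$ since $\rho$ factors through the latter. The main obstacle is the cohomological verification of condition (3), which depends on bounds for $H^1$-vanishing of Chevalley-type subgroups acting on their Lie algebras, valid for $p$ sufficiently large relative to $n$; this is an input to, rather than a new contribution of, the present paper.
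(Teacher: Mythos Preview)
Your overall strategy is exactly the paper's: deduce the corollary from Theorem \ref{t:triang} by showing that absolute irreducibility of $\br|_{\Gamma_{F(\zeta_p)}}$ together with $p \gg_n 0$ forces conditions (3)--(5) of Assumption~A. The paper does this in one line by citing \cite[Corollary~A.7]{fkp-duke}; your treatment of (1), (2), and (4) is correct and slightly more explicit than the paper's.

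There is, however, a genuine gap in your handling of (3) and (5). The claim that absolute irreducibility of $\br|_{\Gamma_{F(\zeta_p)}}$ ``forces its image modulo scalars to contain a quasi-simple subgroup of $\mr{PGL}_n(k)$ close to $\mr{PSL}_n(\F_p)$'' is simply false: the image can be a symmetric or alternating group in a standard representation, a monomial group arising from an induced character, or a cross-characteristic finite simple group, none of which contain anything close to $\mr{PSL}_n(\F_p)$. What Thorne-type adequacy actually gives for $p \gg_n 0$ is the vanishing of $H^1$ of the image on $\mr{ad}^0$ for \emph{any} absolutely irreducible subgroup, with no large-image conclusion; so your reference to adequacy can be made to work for (3), but the reason you state is wrong. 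For (5) the problem is sharper: you invoke ``Schur on the irreducible $\sln$-module'', but $\br(\fgder)$ need not be irreducible over $\Gamma_F$ even when $\br|_{\Gamma_{F(\zeta_p)}}$ is absolutely irreducible (again, induced representations give easy counterexamples). Ruling out a $\Gamma_F$-surjection from $\br(\fgder)$ onto a subquotient of $\br(\fgder)(1)$ requires a Jordan--H\"older comparison together with control on the order of the mod-$p$ cyclotomic character relative to $n$, which is precisely what \cite[Corollary~A.7]{fkp-duke} packages. You should cite that result (as the paper does) rather than sketch an ad hoc argument that does not go through.
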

\begin{proof}
  This follows immediately from Theorem \ref{t:triang} once we note
  that the irreducibility assumption implies that (3), (4) and (5) of
  Assumption A automatically hold; see \cite[Corollary
  A.7]{fkp-duke} for a more general statement that holds for arbitrary
  $G$.
\end{proof}

\begin{rem}
  The recent results of \cite{bop-local} show that the second bulleted
  assumption in \cite[Theorem 6.21]{fkp-duke} is always satisfied for
  $G = \gln$. Consequently, if $F$ is an arbitrary number field, and
  we drop the $\glo$ assumption in Thereom \ref{t:triang}, we can
  construct characteristic zero lifts which are unramified outside a
  finite set of primes, but without any control at primes dividing $p$.
\end{rem}

\begin{rem}
  As far as we are aware, it is not known whether or not every $\glo$
  representation has a geometric lift, even when $F$ is $\Q$---the
  examples of Calegari mentioned in the introduction are not
  $\glo$---and this question is closely related to the question
  whether torsion cohomology Hecke eigenclasses of arithmetic subgroups of $\gln$
  lift to characteristic zero after passing to a finite index
  subgroup. However, Conjecture 1.2.3 of \cite{hansen-eig} combined
  with Theorem \ref{t:triang} implies that any irreducible
  representation $\br$ satisfying the hypotheses of the theorem is the
  reduction of the Galois representation associated to the cohomology
  of an arithmetic subgroup of $\gln$ with coefficients in a suitable
  infinite-dimensional module (see \cite[\S 5]{joh-newt-ext}) which is
  a $\Q_p$-vector space.
\end{rem}

\subsection{Trianguline lifts for representations into connected
  reductive groups}

Using Lemma \ref{l:triang2} (and the results of \cite{fkp-duke} and
\cite{fkp-invent}) we now prove a generalisation of Theorem
\ref{t:triang} for representations valued in general split connected
reductive groups $G$ satisfying the assumptions of the lemma. Before
we can do this though we need an analogue of the definition of $\glo$
used for $\gln$. The definition below, motivated by the numerics of
the trianguline local condition, is weaker than $\glo$ when applied to
$\gln$, but it still suffices to prove our lifting result.
\begin{defn} \label{d:odd2}%
  $ $
  \begin{enumerate}
  \item Let $K$ be any field and $G$ a split reductive group (not
    necessarily connected) over $K$. An involution $c \in G(K)$ is
    said to be \emph{t-odd} if $\dim(\fgder)^c \leq \dim(B^{\der})$,
    where $B^{\der}$ is a Borel subgroup of $G^{\der}$.
  \item Let $F$ be a number field, $K$ any topological field,
    and $G$ as above. We say that a continuous representation
    $\br: \Gamma_F \to G(K)$ is \emph{t-odd} if for every real place
    $v$ of $F$ and $c_v \in \Gamma_{F_v}$ the corresponding complex
    conjugation, the involution $\br(c_v) \in G(K)$ is t-odd as in
    (1).
  \end{enumerate}
\end{defn}

We then have the following generalisation of Theorem \ref{t:triang}.
\begin{thm} \label{t:triang2} %
  Let $F$ be a number number field, $G$ a connected and split
  reductive group, and $\br:\Gamma_{F,\mc{S}} \to G(k)$ a t-odd
  continuous representation. Assume $p \gg_{G} 0$, $\br$ satisfies the
  conditions of Assumption A and $G$ has no factor of type $\mr{G}_2$,
  $\mr{F}_4$ or $\mr{E}_8$.  Fix a lift ${\mu}$ of $\bar{\mu}$ as in
  \ref{s:general} and also assume that after possibly replacing $E$ by
  a finite extension, for each finite prime $v$ of $F$,
  $\br|_{\Gamma_{F_v}}$ has a lift to $G(\co)$ with determinant
  $\mu|_{\Gamma_{F_v}}$ which is trianguline and very regular if
  $v \mid p$. Then there exists a finite set of places
  $\mc{S}' \supset \mc{S}$ and an extension $E'$ of $E$ with ring of
  integers $\co'$ such that $\br$ lifts to a t-odd representation
  $\rho: \Gamma_{F, \mc{S}'} \to G(\co')$ with determinant $\mu$ which
  is very regular trianguline at all primes of $F$ above
  $p$. Furthermore, one can also ensure that
  $\rho(\Gamma_{F, \mc{S}'})$ contains an open subgroup of
  $G^{\der}(\co')$.
\end{thm}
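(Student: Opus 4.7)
The plan is to deduce Theorem \ref{t:triang2} from Theorem \ref{t:general} by the same template as Theorem \ref{t:triang}, with Lemma \ref{l:triang2} replacing Lemma \ref{l:triang} at primes above $p$. Assumption A is part of the hypotheses, so the task will be to verify Assumption B: for each finite prime $v \in \mc{S}$, produce a compact $E$-adic submanifold $U_v$ of the fixed-determinant lifting ring of $\br|_{\Gamma_{F_v}}$ whose collective dimensions satisfy \eqref{e:ad}, with the additional requirement that $U_v$ lies in the very regular trianguline locus when $v \mid p$.

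For primes $v \nmid p$, I would take $U_v$ of the maximal dimension $d_v = \dim(\fgder)$ around the hypothesised local lift, as in Remark \ref{r:lifts}(2), so that $d_v - \dim(\fgder) = 0$. For $v \mid p$, Lemma \ref{l:triang2} applies (its hypotheses on $G$ and the existence of a very regular trianguline lift are exactly the ones assumed in the statement) and produces a $\wh{G^{\der}}(\co)$-invariant compact $E$-adic submanifold $U_v$ of dimension $d_v = \dim(G^{\der}) + [F_v:\Q_p]\bigl(\dim(B) - \dim(Z(G))\bigr)$, consisting of very regular trianguline lifts with determinant $\mu|_{\Gamma_{F_v}}$. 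Using $\dim(B) = \dim(B^{\der}) + \dim(Z(G))$ and summing over $v \mid p$ then yields $\sum_{v \text{ finite}}(d_v - \dim(\fgder)) = [F:\Q]\dim(B^{\der})$.

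The key step is to verify \eqref{e:ad} against the archimedean contribution. By the t-odd assumption, at each real place $c(\br, v) = \dim(\fgder)^{c_v} \leq \dim(B^{\der})$, while at each complex place $\Gamma_{F_v}$ is trivial, so $c(\br, v) = \dim(\fgder) = 2\dim(B^{\der}) - \dim(T^{\der}) \leq 2\dim(B^{\der})$. Summing, $\sum_{v \mid \infty}c(\br, v) \leq (r_1 + 2r_2)\dim(B^{\der}) = [F:\Q]\dim(B^{\der})$, giving \eqref{e:ad}. Theorem \ref{t:general} then produces a lift $\rho$ of $\br$ with the desired local behaviour, and $\rho$ is automatically t-odd: for $p$ odd the $\pm 1$-eigenspace decomposition of the involution $\rho(c_v)$ on $\fgder \otimes E$ lifts integrally and specialises to that of $\br(c_v)$ on $\fgder \otimes k$, so the dimension of invariants is preserved. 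The statement on the image containing an open subgroup of $G^{\der}(\co')$ is conclusion (1) of Theorem \ref{t:general}. The genuine obstacle in this argument is not the dimension bookkeeping but the input from Lemma \ref{l:triang2}, whose standing hypotheses on $G$ and on the existence of a very regular trianguline lift are precisely the conditions imported into Theorem \ref{t:triang2}.
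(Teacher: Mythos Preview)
Your proposal is correct and follows the same approach as the paper: reduce to Theorem \ref{t:general}, use Lemma \ref{l:triang2} at primes above $p$ and the standard local deformation estimate at other primes, and check adequacy via the t-odd condition. In fact you spell out the dimension bookkeeping (notably $\dim(B)-\dim(Z(G))=\dim(B^{\der})$ and the complex-place bound $\dim(\fgder)=2\dim(B^{\der})-\dim(T^{\der})\leq 2\dim(B^{\der})$) more explicitly than the paper, which simply asserts that t-oddness is ``designed precisely'' to make the cocycles adequate. One small imprecision: Lemma \ref{l:triang2} does not give a fully $\wh{G^{\der}}(\co)$-invariant $U_v$, only that some $(\wh{G})^{(m)}(\co)$-orbit of a point lies in $U_v$; but that is exactly what Proposition \ref{prop2} (and hence the proof of Theorem \ref{t:general}) actually uses, so the argument is unaffected.
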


\begin{proof}
  The proof is essentially the same as that of Theorem \ref{t:triang}
  after noting that the definition of t-odd is designed precisely to
  ensure that the cocycles obtained by applying Proposition
  \ref{prop2} and Lemma \ref{l:triang2} for primes $v \mid p$ and
  \cite[Proposition 4.7]{fkp-duke} for other primes are adequate in
  the sense of Definition \ref{d:strong}. This ensures, by Lemma
  \ref{l:semibal}, that the (relative) Selmer and dual Selmer groups
  are semi-balanced which is the key condition needed for the
  arguments of \cite{fkp-duke} to go through as in the proof of
  Theorem \ref{t:triang}.
\end{proof}
It will be clear to the reader that using this theorem one may also
prove an analogue of Corollary \ref{c:triang}, but we do not formulate
it explicitly here. The $t$-odd assumption in the theorem can also be
replaced by the slightly weaker condition
$ \sum_{v \mid \infty} \dim(\fgder)^{\Gamma_{F_v}} \leq
[F:\Q]\dim(B^{\der})$.
\begin{rem}
  The existence of lifts for primes $v \nmid p$ has been proved for
  some groups other than $\gln$ by Booher \cite{booher:minimal} and
  very regular trianguline (in fact, crystalline) lifts for primes
  $v \mid p$ are known to exist when $\br|_{\Gamma_{F_v}}$ is
  absolutely irreducible by recent work of Lin \cite{lin-irred}.
\end{rem}

\bibliographystyle{siam}
\bibliography{biblio.bib}

\end{document}